\newenvironment{figurehere}
  {\def\@captype{figure}}
  {}
\long\def\comment#1{}
\newtheorem{theorem}{Theorem}
\newtheorem{corollary}{Corollary}
\newtheorem{lemma}{Lemma}
\newtheorem{assumption}{Assumption}
\theoremstyle{definition}
\numberwithin{remark}{section}
\newtheorem{example}{Example}
\newcommand{\eps}{\varepsilon}
\newcommand{\be}{\begin{eqnarray}}
\newcommand{\ee}{\end{eqnarray}}
\newcommand{\EC}{\mathscr{E}}
\newcommand{\ba}{\begin{array}}
\newcommand{\ea}{\end{array}}
\newcommand{\bs}{\begin{align}\begin{split}\nonumber}
\newcommand{\bsnumber}{\begin{align}\begin{split}}
\newcommand{\es}{\end{split}\end{align}}
\newcommand{\n}{n}
\renewcommand{\(}{\left(}
\renewcommand{\)}{\right)}
\renewcommand{\[}{\left[}
\renewcommand{\]}{\right]}
\renewcommand{\hat}{\widehat}
\newcommand{\QED}{$\blacksquare$}
\newcommand{\F}{\mathscr{F}}
\newcommand{\N}{\mathrm{N}}
\newcommand{\RF}{\mathrm{RF}}
\newcommand{\Ep}{{\mathrm{E}}}
\renewcommand{\Pr}{{\mathrm{P}}}
\def\\upsilon  {{d}}
\renewcommand{\hat}{\widehat}
\renewcommand{\leq}{\leqslant}
\renewcommand{\geq}{\geqslant}
\newcommand{\convdist}{\overset{d}{\rightarrow} }
\newcommand{\convprob}{\overset{p}{\rightarrow} }
\newcommand{\MI}{ { \mathscr M} }
\newcommand{\PI}{\mathscr P}
\newcommand{\betahat}{\hat \beta}
\newcommand{\Jone}{ \(\log(KL)^{1/2}\)}
\newcommand{\Jtwo}{\( \log(L)^{1/2}\)}
\newcommand{\Jthree}{\(KK^{\alpha_{\rho} } L^{-\alpha_{\mathscr Z}} \)}
\newcommand{\Jfour}{\(n^{1/2} \zeta_0(K) L^{-\alpha_{\mathscr Z}}\)}
\newcommand{\Jfive}{\(   n^{-1/2}K^{\alpha_{\rho}} K^{\alpha_{\Phi}/2}s_0^{1/2} \log(L)^{1/2} +  L^{- \alpha_{\mathscr Z}}K^{\alpha_{\rho}}   \)}
\newcommand{\Jsix}{\(  n^{-1/2}K^{\alpha_{\rho}} K^{\alpha_{\Phi}/2}s_0^{1/2} \log(L)^{1/2} +  L^{- \alpha_{\mathscr Z}}K^{\alpha_{\rho}}    \)}
\newcommand{\Jseven}{\(  n^{-1/2}K^{\alpha_{\rho}} K^{\alpha_{\Phi}/2}s_0K^{\alpha_{I_\Phi}/2} \log(L)^{1/2} +  L^{- \alpha_{\mathscr Z}}K^{\alpha_{\rho}} \)}
\newcommand{\Jeight}{\Jseven}
\newcommand{\Jnine}{\( n^{-1/2} s_0^{1/2}K^{ \alpha_{I_\Phi}/2}  \log(KL)^{1/2} \)}
\newcommand{\Jten}{\(n^{-1/2} s_0^{1/2}K^{ \alpha_{I_\Phi}/2}  \log(L)^{1/2} \)}
\newcommand{\Jsixteen}{K^{1/2}\zeta_0(K)K^{\alpha_{\rho}}L^{-\alpha_{\mathscr Z}}}
\newcommand{\PhiK}{{\Phi_K}}
\author[Damian Kozbur]{  \textbf{ \lowercase{\uppercase{D}amian \uppercase{K}ozbur }}  \\  \textit{\lowercase{  \uppercase{U}niversity of \uppercase{Z}\"urich \\ \uppercase{D}epartment of \uppercase{E}conomics \\ \uppercase{S}ch\"onberggasse 1, 8001 \uppercase{Z}\"urich \\ email: \normalfont{ \texttt{damian.kozbur@econ.uzh.ch}}}}   }
\begin{document}

\title[\textsc{Additively Separable}]{\large \lowercase{\uppercase{I}nference in \uppercase{A}dditively \uppercase{S}eparable \uppercase{M}odels with a \uppercase{H}igh-\uppercase{D}imensional \uppercase{S}et of \uppercase{C}onditioning \uppercase{V}ariables} \\  }

\begin{abstract}
{\begin{onehalfspace} This paper studies nonparametric series estimation and inference for the effect of a single variable of interest $x$ on an outcome $y$ in the presence of potentially high-dimensional conditioning variables $z$. The context is an additively separable model $\Ep[y|x,z] = g_0(x) + h_0(z)$.  The model is high-dimensional in the sense that the series of approximating functions for $h_0(z)$ can have more terms than the sample size, thereby allowing $z$  potentially to have very many measured characteristics.  The model is required to be approximately sparse: $h_0(z)$ can be approximated using only a small subset of series terms whose identities are unknown. This paper proposes an estimation and inference method for $g_0(x)$ called \textit{Post-Nonparametric Double Selection}, which is a generalization of \textit{Post-Double Selection}. Rates of convergence and asymptotic normality for the estimator are derived and hold over a large class of sparse data generating processes.   A simulation study illustrates finite sample estimation properties of the proposed estimator and coverage properties of the corresponding confidence intervals.  Finally, an empirical application to college admissions policy demonstrates the practical implementation of the proposed method. 

\

\textit{Key Words}:  additive nonparametric models, high-dimensional sparse regression, inference under imperfect model selection.  \textit{JEL Codes}: C1.
 \end{onehalfspace}}
\end{abstract} 
\date{First version:  September 2013.  This version: \today. \\  \textit{ } \ \textit{  Correspondence}:  Sch\"onberggasse 1, 8001 Z\"urich, Department of Economics, University of Z\"urich, damian.kozbur@econ.uzh.ch.  \textit{Acknowledgements}:  I thank Christian Hansen, Tim Conley, Matt Taddy, Azeem Shaikh, Dan Nguyen, Dan Zou, Emily Oster, Martin Schonger, Eric Floyd, Kelly Reeve, and seminar participants at University of Western Ontario, University of Pennsylvania, Rutgers University, Monash University, and the Center for Law and Economics at ETH Zurich for helpful comments.  I thank Eric Bettinger for detailed discussion about data used in this paper.  I thank the Ohio Board of Higher Education and Eric Bettinger for providing de-identified data.  I thank Marc Biedermann for support with computations.  I gratefully acknowledge financial support from the ETH Postdoctoral Fellowship}
\maketitle

\pagebreak

\section{Introduction }

Nonparametric estimation in econometrics and statistics is useful for applications in which theory does not provide functional forms for relations between relevant observed variables. In many problems, primary quantities of interest can be computed from the conditional expectation function of an outcome variable $y$ given a regressor of interest $x$,  given by $\Ep[y|x] = f_0(x).$ 
\noindent In this case, nonparametric estimation is a flexible means for estimating unknown $f_0$ from data under minimal assumptions.

Many econometric problems also feature important conditioning information, given through variables denoted here by $z$.
Failing to properly control for $z$ will lead to incorrect estimates of the effects of $x$ on $y$.  Such situations require learning an entire family of conditional expectation functions,  indexed by $z$, given by $$\Ep[y|x,z]=f_{0,z}(x).$$

This paper studies series estimation and inference of $f_{0,z}$ in a particular case characterized by the following two main features.  
\begin{itemize}
\item[1.] $f_{0,z}$ is additively separable in $x$ and $z$, meaning that for some $g_0$ and $h_0$, $$f_{0,z}(x) = g_0(x) + h_0(z).$$
\item[2.] The conditioning variables $z$ are observable and high-dimensional.
\end{itemize} 

\vspace{1mm}

The primary targets of inference for this paper, $\theta_0$, arise from real-valued functionals $a$;  $$\theta_0 = a(g_0) \ \ \text{where} \ \ g \mapsto a(g) \in \mathbb R.$$  Leading examples of such functionals include the average derivative $a(g) = \Ep[g'(x)]$ or the difference of $a(g) = g(x_0^2)- g(x_0^1)$ for two distinct $x_0^1,x_0^2$ of interest. 
This paper constructs estimates $\hat g$ and $\hat \theta = a(\hat g)$, based on a proposed procedure, \textit{Post-Nonparametric Double Selection}, a generalization of \textit{Post-Double Selection} first defined in \cite{BCH-PLM}.  Under regularity conditions, the estimate $\hat \theta$, correctly normalized, is asymptotically Gaussian with suitably negligible bias.  This in turn enables construction of confidence intervals that cover $\theta_0$ to some pre-specified significance level.

 \subsection{Literature Review and Motivation for Additively Separable Models}
Additively separability, when it holds, is a convenient restriction in many econometric problems because any \textit{ceteris paribus} effect of changing $x$ to $x'$ is completely described by $g_0$.  Furthermore, a major statistical advantage when additive separability holds is that individual components $g_0, h_0$ can be estimated at faster minimax rates than a joint estimation of the family $f_{0,z}$ without imposing the additive separability restriction.\footnote{Results on faster minimax rates for separable models exist for both kernel methods (marginal integration and back-fitting methods) and series based estimators.  For a general review of these issues, see e.g. \cite{li:racine:book}.}  

Early study of additively separable models was initiated by  \cite{buja1989}  and \cite{hastie1986}, who describe backfitting techniques.  \cite{ChenHardleLintonSeveranceLossin}  propose marginal integration methods in the kernel context.  \cite{additive:derivatives} and \cite{YANG2003521} consider estimation of derivatives in components of additive models.  \cite{LocalPartitionedReg} develop local partitioned regression that can be applied more generally than the additive model.

Series estimation is convenient in additively separable models because additively separable series expansions are simple to construct.  This paper works with series expansions of the form $(p^K(x),q^L(z) )$ where $ p^K(x) = (p_{1K}(x),...,p_{KK}(x)) $ and $q^L(z) = (q_{1L}(z),...,q_{LL}(z))$
consisting of $K$ and $L$ different transformations of $x$ and $z$ such that their linear combinations can approximate $g_0(x)$ and $h_0(z)$.  References for general large-sample properties of series estimators as well as specializations to additively separable models have been derived by \cite{stone1985}, \cite{cox1988}, \cite{andrews:hang:additiveinteraction}, \cite{eastwoodgallant},  \cite{andrews1991} \cite{newey:series} \cite{victor:newseries} and many others.  Additional information about both kernel and series based estimation can also be found in the reference text \cite{li:racine:book}. 
 
Finally, \cite{huang2010} consider estimation of additively separable models with many additive components relative to the sample size.  The authors propose and analyze a series estimation approach with a Group-Lasso penalty to penalize different additive components.  This paper therefore studies a very similar setting to the one in \cite{huang2010}, but constructs a valid procedure for forming confidence intervals rather than focusing on parameter estimation error.  

\subsection{Literature Review and Motivation for High-Dimensional Estimation}A high-dimensional framework for $z$ allows researchers substantial flexibility in modeling conditioning information when the primary object of interest is $g_0$.  This framework is suited for analysis of particularly rich or \textit{big} datasets with a large number of conditioning variables.\footnote{In many cases, larger set of covariates can lend additional credibility to conditional exogeneity assumptions.  See the discussion in \cite{BCH-PLM}.}  In this paper, high-dimensionality of $z$ is formally defined by the total number of terms in a series expansion of $h_0(z)$.  
This paper considers expansions such that
$$h_0(z) = q_{1L}(z) \beta_{h_0, L,1} + ... + q_{LL}(z)\beta_{h_0, L,L} + o(1), \ \text{possibly} \ L>n$$

\noindent with $\beta_{h_0, L,j}$ denoting the $j$th component of an approximating vector $\beta_{h_0,L}$, the asymptotic $o(1)$ valid for $L \rightarrow \infty$, and $n$ is sample size.  In practice, high-dimensional $z$ can arise in multiple ways.   For example, $z$ can be high-dimensional itself, with many measured characteristics. 
Alternatively, $z$ itself can also have moderate dimension, but any sufficiently expressive series expansion of $h_0$ must have many terms as a consequence of the {curse of dimensionality}.  The theory that follows is agnostic to the way in which $L$ arises.  However, in this paper, discussion will always be cast referencing $q^L(z)$.

High-dimensional techniques are tools for handling estimation problems in which the number of parameters exceeds the sample size.\footnote{Statistical models which are extremely flexible, and thus overparameterized, are likely to overfit the data, leading to poor inference and out of sample performance.  Therefore, when many covariates are present, regularization is necessary.}  Most high-dimensional techniques require additional structure to be imposed on the problem at hand in order to ensure good performance.  One common structure for which reliable high-dimensional techniques exist is sparsity.  Sparsity designates that the number of nonzero parameters in a statistical model is small relative to the sample size.  In this setting, common techniques include $\ell_1$-regularized regression like Lasso and Post-Lasso\footnote{The Lasso is a shrinkage procedure which estimates regression coefficients by minimizing a quadratic loss function plus an $\ell_1$ penalty for the size of the coefficient.  The nature of the penalty gives Lasso favorable property that many parameter values are set identically to zero and thus Lasso can also be used as a model selection technique. Post-Lasso fits an ordinary least squares regression on variables with non-identically-zero estimated Lasso coefficients. For theoretical and simulation results about the performance of these two methods, see \cite{FF:1993} \cite{T1996}, \cite{elements:book} \cite{CandesTao2007} \cite{BaiNg2008}, \cite{BaiNg2009b}, \cite{BickelRitovTsybakov2009},  \cite{BuneaTsybakovWegkamp2006}, \cite{BuneaTsybakovWegkamp2007b} \cite{BuneaTsybakovWegkamp2007}, \cite{CandesTao2007}, \cite{horowitz:lasso}, \cite{knight:shrinkage}, \cite{Koltchinskii2009}, \cite{Lounici2008}, \cite{LouniciPontilTsybakovvandeGeer2009}, \cite{MY2007}, \cite{RosenbaumTsybakov2008}, \cite{T1996}, \cite{vdGeer}, \cite{Wainright2006}, \cite{ZhangHuang2006}, \cite{BC-PostLASSO},  \cite{BuhlmannGeer2011}, \cite{BC-PostLASSO}, among many more. }.  Other techniques include the Dantzig selector, Scad, and Stepwise regression.  To leverage favorable properties of high-dimensional techniques, this paper will also work under sparsity assumptions on series expansions in $q^L(z)$.

The main challenge in statistical inference or construction of confidence intervals after model selection is in attaining robustness to model selection errors. When coefficients are small relative to the sample size (i.e., statistically indistinguishable from zero), model selection mistakes are unavoidable.\footnote{Under some restrictive conditions --- for example, beta-min conditions that constrain nonzero coefficients to have large magnitudes --- perfect model selection can be attained (see \cite{zhao:yu:2006}).}  Such model selection mistakes can lead to distorted statistical inference in much the same way that pretesting procedures lead to distorted inference.
This intuition is formally developed in \cite{potscher} and \cite{leeb:potscher:pms}.  Nevertheless, given the practical value of dimension reduction and the increasing prevalence of high-dimensional datasets, studying robust high-dimensional inference techniques is an active area of current research.  Several recent papers offer solutions to this problem; see, ie., \cite{BellChernHans:Gauss}, \cite{BellChenChernHans:nonGauss}, \cite{ZhangZhang:CI}, \cite{BCH2011:InferenceGauss},  \cite{BCH-PLM}, \cite{vdGBRD:AsymptoticConfidenceSets}, \cite{JM:ConfidenceIntervals}, \cite{BCFH:Policy}, \cite{BCHK:Panel}.\footnote{Citations are ordered by date of first appearance on arXiv.} 

One solution, in \cite{BCH-PLM}, defines the Post-Double Selection procedure for partially linear models $\Ep[y|x,z] = \alpha_0x + h_0(z)$ with high-dimensional $z$.  The procedure reduces the dimension of $q^L(z)$  in two steps that involve Lasso estimation.  First, select all terms $q_{jL}(z)$ from $q^L(z)$ predictive of $x$ using Lasso.  Second, select all terms $q_{jL}(z)$ from $q^L(z)$ that are predictive of $y$ using Lasso.  $\alpha_0$ is then estimated using least squares regression of $y$ on $x$ and the union, $\tilde q(z)$, of selected terms.  This $\hat \alpha$ can be expressed through an orthogonal estimating equation. 
Under regularity conditions, it is sufficiently insensitive to model selection mistakes so that $n^{1/2}(\hat \alpha - \alpha_0)$ is asymptotically Gaussian.

\subsection{Description of Theoretical Contributions in this Paper}

The estimation procedure proposed in this paper is a generalization of Post-Double Selection.  Relative to Post-Double selection, this paper requires the new concept of a \textit{Selection Dictionary}, $\PhiK$, which consists of transformations of $x$.   The selection dictionary $\PhiK$ is used to define a first stage selection step in Post-Nonparametric Double Selection:  all terms $q_{jL}(z)$ from $q^L(z)$ predictive of $\varphi(x)$ for some $\varphi \in \PhiK$ using Lasso are selected and used in a post-model selection step.  As a result, estimation of $g_0$ and $h_0$ depends on three dictionaries $\PhiK$, $p^K(x)$ and $q^L(z)$. 
This paper suggests $\PhiK=\Phi_{K,\text{Span}} = \{ \varphi(x) \in \text{LinSpan}(p^K(x)): \text{sup}_x \| \varphi(x) \|_2 \leq 1 \}$.  The motivation in using $\Phi_{K,\text{Span}}$ is to construct final estimates of $g_0$ that are invariant to different orthogonalizations of $p^K(x)$.  However, the theory here is general enough to analyze the performance many different $\PhiK$, including $\Phi_{K,\text{Simple}} := \{p_{1K}(x),...,p_{KK}(x) \}$.  

Post-Nonparametric Double Selection can be viewed as related to a modified version of Post-Double Selection to general sequences of models, $\Ep[y|x_1,...,x_K,z] = \alpha_{0,1}x_1 + ... +\alpha_{0,K}x_K + h_0(z), \ K\rightarrow \infty$.  
  However, the current problem has added structure: the series expansion for approximating $g_0(x)$ is generated from transformations of $x$ belonging to a fixed finite dimensional space $\mathscr X \subseteq \mathbb R^r$.  The added structure on $\mathscr X$ is used to prove sharper estimation rates than for example, \cite{TU}, which does not have this structure.\footnote{\cite{TU} develop an inferential method, \textit{Targeted Undersmoothing}, applicable to general increasing sequences $x_1,...,x_K$ of variables of interest but never attains $\sqrt{n}$-confidence intervals of functionals $a(\alpha_{01},...,\alpha_{0,K})$.} 

This paper proves convergence rates and asymptotic normality for Post-Nonparametric Double Selection estimates of $g_0(x)$ and $\theta_0$ respectively.  The proofs use techniques in Newey's analysis of series estimators (see \cite{newey:series}) and ideas in Belloni, Chernozhukov, and Hansen's analysis of Post-Double Selection (see \cite{BCH-PLM}), along with careful tracking of a new notion of density of the set $\Phi_K$ relative to the linear span of $p^K(x)$.   When $\PhiK$ has higher density, estimates of $g_0(x)$ and $\theta_0$ become less sensitive to model selection mistakes.  

\section{Series estimation with a reduced dictionary}

This section establishes notation, reviews series estimation, and describes series estimation on a \textit{reduced dictionary}.  Consider data 

\vspace{-5mm}
$$\mathscr D_n = (y_i,x_i,z_i)_{i=1}^n$$ 

\vspace{-3mm} \noindent which are $ n \ i.i.d. \text{realizations of } (y,x,z),$ which satisfy $ \Ep[y|x,z]= g_0(x) + h_0(z). $ 
$y$ are scalar outcome variables, $x$ are explanatory variables of interest, $z$ are conditioning variables, and $g_0,h_0$ are unknown functions to be estimated.  Formal statements on $\mathscr D_n$ are in Section 4.

\subsection{Mechanical Description: Series Estimation and Series Estimation with a Reduced Dictionary} Series estimation of $(g_0,h_0)$ is carried out by least squares regression on series expansions in $x$ and $z$.  Define a dictionary of approximating functions by $$(p^K(x),q^L(z) )$$ 
\noindent where $ p^K(x) = (p_{1K}(x),...,p_{KK}(x)) $
and
$q^L(z) = (q_{1L}(z),...,q_{LL}(z))$
are each a series of $K$ and $L$ functions such that their linear combinations can approximate $g_0(x)$ and $h_0(z)$.  Construct the matrices $P = [p^K(x_1),...,p^K(x_n)]'$,  $ Q = [ q^L(z_1),..., q^L(z_n)]'$,$Y = (y_1, ...,y_n)'$, and let $\hat \beta_{y,(K,L)} = ([ P \ Q] '[P \  Q])^{-1}[ P \  Q]'Y$ be the least squares estimate\footnote{In all instances in this paper, when the required inverse does not exist, a pseudo-inverse may be used.} from $Y$ on $[P \ Q]$.  Let $[\hat \beta_{y,(K,L)}]_g$ be the components of $\hat \beta_{y,(K,L)}$ corresponding to $p^K$.
Then $\hat g(x)$ is defined by
$$\hat g(x) = {p^K(x)}' [\hat \beta_{y,(K,L)}]_g.$$

\vspace{-3mm}\noindent When $K+L>n$, dimension reduction or regularization is required.  Consider reductions  
$$(p^K(x), q^L(z)) \ \ \ \ \overset{\text{reduction}} \longrightarrow \ \ \ \ \  ( \tilde p(x), \tilde q(z)),$$ 

\vspace{-3mm}
\noindent comprised of a subset of the series terms in $(p^K(x), q^L(x))$.{ In this paper, because the primary objects of interest center around $g_0(x)$, it will be the convention to always take $\tilde p(x) = p^K(x)$.}  The \textit{post-model-selection} estimate of $g_0(x)$ is then defined analogously to the traditional series estimate.  Let $\hat \beta_{y, (\tilde p, \tilde q)} = ([ \tilde P \ \tilde Q] '[\tilde P \ \tilde Q])^{-1}[ \tilde P \ \tilde Q]'Y$ where $\tilde P = [\tilde p(x_1),...,\tilde p(x_n)]' = P$,  $ \tilde Q = [\tilde q(z_1),...,\tilde q(z_n)]'$ and as before, let $ [\hat \beta_{y,(\tilde p, \tilde q)}]_g$ be the components of $\hat \beta_{y,(\tilde p,\tilde q )}$ corresponding to $\tilde p$.  Then $\hat g$ is defined by
$$\hat g(x) = {\tilde p(x)}'  [\hat \beta_{y,(\tilde p, \tilde q)}]_g.$$

\vspace{-3mm} \noindent Finally, consider a functional $a(g) \in \mathbb R$ and set $\theta_0 = a(g_0)$.  Here $\theta_0$ is estimated with 

\

\vspace{-12mm}
$$\hat \theta = a(\hspace{.3mm} \hat g \hspace{.3mm}).$$  

\vspace{-2mm}
\noindent In order to use $\hat \theta$ for inference on $\theta_0$, an approximate expression for the variance $ var(\hat \theta)$ is necessary. The expression for the variance will be approximated using the delta method. 
Let $\hat A = \frac{\partial a( p^K(x) ' b) }{\partial b}([ \hat \beta_{y,(\tilde p, \tilde q)}]_g) $.  Let $\MI = \text{Id}_n -  \tilde Q(\tilde Q '\tilde Q)^{-1} \tilde Q'$ be the projection matrix onto the space orthogonal to the span of $\tilde Q$.  Finally, let $\hat  {\mathscr E}  = Y - [\tilde P, \tilde Q ]\hat \beta_{y,(\tilde p, \tilde q)}$.
Estimate $\hat V$ with:
\begin{align*}
&\hat V = \hat A \hat \Omega ^{-1} \hat \Sigma \hat \Omega^{-1} \hat A, \ \ \ \ \ \ 
\hat \Omega = n^{-1}\tilde P' \MI \tilde P, \ \ \ \ \  \ \
\hat \Sigma = n^{-1}\tilde P' \MI \text{diag}( \hat  {\mathscr E}   )^2 \MI \tilde P .
\end{align*} 

\vspace{-4mm}
\noindent The next sections give a dictionary reduction technique and regularity conditions, for which 
$$n^{1/2}\hat V^{-1/2}(\theta_0 - \hat \theta) \rightarrow_d N(0,1).$$ 

\vspace{-3mm}
The practical value of the results is that they formally justify approximate Gaussian inference for $\theta_0$.  A corollary is that for any significance level $\gamma \in (0,1)$, with $c_{1-\gamma/2}$ the $(1-\gamma/2)$-quantile of the standard Guassian distribution, it holds that
\vspace{-3mm}
$$\Pr( \theta_0 \in [ \hat \theta - c_{1-\gamma/2}n^{1/2} \hat V^{-1/2} , \hat \theta - c_{1-\gamma/2}n^{1/2}\hat V^{-1/2} ]  ) \rightarrow \gamma. $$

\subsection{Dictionary Reduction by  Post-Nonparametric Double Selection}

Coverage probabilities of the above confidence sets depend critically on how the dictionary reduction is performed (see \cite{potscher}, \cite{leeb:potscher:pms}).  
This section proposes a procedure for selecting $\tilde q(z)$ which, under regularity conditions, yields asymptotically valid confidence intervals.  

 The dictionary reduction procedure described below relies on Lasso-based model selection for concreteness.\footnote{Other model selection procedures like forward selection with similar properties to Lasso are also possible.}  
 For any random variable $v$ with observations $(v_1,...,v_n)$, the \textit{Lasso}  estimate $v$ on $q^L(z)$ with \textit{penalty parameter} $\lambda$ and \textit{loadings} $l_j$ is defined as a solution  
  $$\hat \beta_{v,L,\text{Lasso}} \in \text{arg} \min_{b \in \mathbb R^L} \sum_{i=1}^n (v_i - q^L(z_i)'b)^2 + \lambda \sum_{j=1}^L |l_j b_j|.$$ \vspace{-1mm}
 The corresponding \textit{selected set} $I_{v,L}$ is defined as $$I_{v,L} = \{ j : \hat \beta_{v,L,\text{Lasso},j}\neq 0 \}.$$  
 
 \vspace{-3mm}
 \noindent Finally, the corresponding  \textit{Post-Lasso} estimator is defined by $$\hat \beta_{v,L,\text{Post-Lasso}} \in \text{arg} \min_{b \in \mathbb R^L: b_j = 0 \ \text{for} \ j \notin I_{v,L}} \sum_{i=1}^n (v_i - q^L(z_i)'b)^2.$$

Lasso is chosen over other model selection possibilities for several reasons.  It is a computationally efficient procedure that has the mechanical ability to set coefficients identically equal to zero.   $|I_{v,L}|$ will be less than $n$ with high probability if a sufficiently high penalty level is chosen.  It is consistent with the previous literature:  \cite{BCH-PLM} used Lasso.    Appendix A in the Supplemental material gives implementation details for choices of $\lambda$ and $l_j$ in various Lasso steps required in this paper following \cite{BellChenChernHans:nonGauss}, who are motivated by allowing for heteroskedasticity.\footnote{The penalty parameter $\lambda$ controls the degree of regularization; $l_j$ are required for dealing with heteroskedasticity of unknown form, but can be directly calculated as in \cite{BellChenChernHans:nonGauss}.  } .






Next, Algorithm 1 defines Post-Nonparametric Double Selection. Let $\PhiK$ be a class of test functions with elements of the form $ \varphi(x)$.  $\PhiK$ is called the \textit{Selection Dictionary.}
Note, Post-Double Selection for a partially linear model $\Ep[y|x,z] = \alpha_0 x + h_0(z)$ in \cite{BCH-PLM} for scalar $x$ is the same as Algorithm 1 with singleton $\Phi_K = \{ x \}$.

\vspace{3mm}

\noindent{\textbf{Algorithm 1}. \textit{Post-Nonparametric Double Selection}}

\begin{itemize}

\item[\textbf{1.}] \textit{First Stage Model Selection Step}.  For each $\varphi \in \PhiK$, perform Lasso regression $\varphi(x)$ on $q^L(z)$ with penalty $\lambda_{\varphi}$ and loadings $l_{\varphi,j}$.  Let $I_{\varphi,L}$ be the selected terms.  Let $I_{\PhiK} = \cup_{\varphi\in\PhiK} I_{\varphi,L}$ be the union set of selected terms.

\item[\textbf{2.}] \textit{Reduced Form Model Selection Step}. Perform Lasso regression $y$ on $q^L(z)$ with penalty $\lambda_{\text{RF}}$ and loadings $l_{\text{RF},j}$.  Let $I_{\text{RF}}$ be the set of selected terms.

\item[\textbf{3.}] \textit{Post-Model Selection Estimation}. Set  $I_{\PhiK + \text{RF}} = I_{\PhiK }\cup I_{\text{RF}}$.  Estimate $\theta_0$ using $\hat \theta$ based on the reduced dictionary $(\tilde p(x),\tilde q(z)) = (p^K(x),  [q_{jL}(z)]_{j \in I_{\PhiK + \text{RF}}} ).$
\end{itemize}

\

There are several possibilities for $\Phi_K$.  This paper proposes the following. 
$$\Phi_{K,\text{ Span}} = \{ \varphi(x) \in \text{LinSpan}(p^K(x)) : \text{sup}_{x \in \mathscr X} \| \varphi(x)\|_2 \leq 1 \},$$
\noindent where $\mathscr X$ is the support of $x$.  This choice of $\PhiK$ is desirable because the resulting first stage model selection is invariant to different orthogonalizations of $p^K(x)$ up to rescaling.\footnote{Identities of covariates selected in the implementation in Appendix A in the Supplement are invariant to rescaling of the left-hand side variable.  Working instead with the condition $ \text{var}( \varphi(x)) \leq 1$, though infeasible because $\text{var}(\varphi(x))$ is unobserved, yields identical estimates $\hat \theta$.  Regularity conditions in Section 3 are stated in terms of $\text{var}(\varphi(x))$. }

The calculation of $I_{\Phi_{K,\text{Span}}}$ does not necessarily require the estimation of a continuum of Lassos.  Instead, it is sufficient to compute the following: 
$$\forall j \leq L: \ \ \max_{\varphi \in \PhiK} \left  | [ \hspace{1mm} \hat \beta_{\varphi (x),L, \text{Lasso}} \hspace{1mm} ]_j \right | .$$

\vspace{-2mm}
\noindent The above quantities correspond to a set of $L$ finite dimensional optimization problems whenever $\Phi_K$ has a finite dimensional parameterization (as does $\PhiK = \Phi_{K,\text{Span}}$).
The observation that $q_{jL}$ is selected into $I_{\PhiK}$ only when there is $\varphi \in \PhiK$ such that $j \in I_{\varphi,L}$ shows that knowing $\max_{\varphi \in \PhiK} \left  | [ \hspace{1mm} \hat \beta_{\varphi (x),L, \text{Lasso}} \hspace{1mm} ]_j \right | $ is sufficient for knowing $I_{\PhiK}$  i.e., for  estimating $\theta_0$, only the identity of selected terms is important (not their Lasso coefficients).  Note: the related reference \cite{BCFH:Policy} does require the full estimation of a continuum of Lasso coefficients, and gives suggestions for approximating this problem using a grid. The grid approach is computationally prohibitive even if $K$ is only moderately large, however.  

Though computing $I_{\Phi_{K,\text{Span}}}$ can be reduced to a finite set of finite-dimensional optimization problems, these problems will be in general nonconvex (though, for fixed $\varphi$, the induced Lasso objective function is convex).  
This paper uses a heuristic strategy in implementation.  For each $j \leq L$, a Lasso regression is run using exactly one test function, $\check \varphi_j \in \Phi_K$.  The choice of $\check \varphi_j $ is made based on being likely to select $q_{jL}$ relative to other $\varphi \in \Phi_K$.  Specifically, for each $j$, $\check \varphi_j$ is set to the linear combination of $p_{1K},...,p_{KK}$ with highest marginal correlation to $q_{jL}$.  Then the approximation to the first stage model selection step proceeds by using $\check I_\PhiK = \bigcup_{j\leq L} I_{ \check \varphi_{j}(x)}$ in place of $I_\PhiK$.  Appendix A in the Supplement contains further details.


\section{Formal Analysis}

This section describes formal conditions which guarantee convergence and asymptotic normality of the Post-Nonparametric Double Selection.  The assumptions are grouped by 4 sections: (3.1) Conditions for nonparametric estimation of $g_0$ and $\theta_0$.   (3.2) High-dimensional estimation conditions. (3.3) Density of $\PhiK$, and (3.4) Additional technical conditions.  

The following definitions are helpful.  Let $g$ be a function on $\mathscr X$.  Define the Sobolev norm $|g|_d = \sup_{x\in \mathscr X} \max_{|c| \leq d} |\partial ^{|c|} g / \partial x^c|$ where the inner maximum ranges over multi-indeces $c$. A vector $X$ is called $s$-sparse if $|\{j: X_j \neq 0 \}| \leq s$.  For a square integrable random variable $v$, $\pi_{q^L}v$ is defined by $\pi_{q^L}v(z) = q^L(z)'\beta_{v,L}$ for $\beta_{v,L}$ such that $\Ep [(v - q^L(z)'\beta_{v,L})^2]$ is minimized.  Equivalently, $\beta_{v,L} = \Ep[q^L(z){q^L(z)}']^{-1} \Ep[q^L(z) v ]$ when well-defined.  For functions $\varphi(x)$, write $\beta_{\varphi,L} = \beta_{\varphi(x),L}$.  Both $K$, $L$ depend on $n$.  All $O$ and $o$ asymptotics are with respect to $n\rightarrow \infty$.

\subsection{{{Conditions for Nonparametric Estimation of $g_0$ and $\theta_0$.}}}

The first group of assumptions defines the data generating process and gives conditions which ensure that if $h_0$ were known a priori, then nonparametric estimation of $g_0$ using $(p^K(x), h_0(z))$ would recover $g_0$ and $\theta_0$ according to the rates stated in Theorems 1 and 2.     Assumption 1-3 are near direct copies of Assumptions 1-3 in \cite{newey:series}, except for the presence of $z$.   Assumption 4 imposes regularity conditions on the functional $a(g)$ and is directly copied from of Assumptions 5--7 in \cite{newey:series}.    Assumption 5 ensures that $g_0$ and $h_0$ are separately identified.

\begin{assumption}[Data and Additive Separability]
The observed data, $\mathscr D_n$, is given by $n$ iid copies of random variables $(x,y,z) \in \mathscr X \times \mathscr Y \times \mathscr Z$ indexed by $1 \leq i \leq n$, so that $\mathscr D_n = (y_i,x_i,z_i)_{i=1}^n.$  $\mathscr Y \subseteq \mathbb R$ and $\mathscr X \subseteq \mathbb R^r$ for some integer $r>0$.  $\mathscr Z$ is a measurable space.  
There is a random variable $\varepsilon$ and functions $g_0$ and $h_0$ such that 
$y= g_0(x) + h_0(z) + \varepsilon, \ \ \Ep[\varepsilon|x,z]=0.$
\end{assumption}

\begin {assumption}[Regularity for $p^K$]
There are nonsingular matrices $B_K$ such that the eigenvalues of $\Omega_{B_Kp^K} =  \Ep\left [ B_K p^K(x) (B_K p^K(x))' \right ] $ are bounded uniformly away from zero in $K$. There are constants $\zeta_0(K)$ with 
$\sup_{ x \in \mathscr X } \| B_Kp^K(x)\|_2 \leq \zeta_0(K)$ 
and $n^{-1}\zeta_0(K)^2K \rightarrow 0$.
\end{assumption}

\begin{assumption}[Approximation of $g_0$] There is an integer $d \geq 0$, a real number $ \alpha_{g_0}>0,$ and  vectors $\beta_{g_0,K}$ such that $|g_0 - {p^K}'  \beta_{g_0,K}|_d = O(K^{-\alpha_{g_0}})$.  
\end{assumption}

\begin{assumption}[Differentiability and Regularity for $a$] The functional $a(g) \in \mathbb R$ is either linear
or the following conditions hold.  $ n^{-1} \zeta_d(K)^4 K^2 \rightarrow 0$ where $\zeta_d(K) = \max_{|c| \leq d } \sup_{ x \in \mathscr X} \| \partial^{|c|} B_K p^K(x)/\partial x^c \|_2$. There is function $D(g; \check g)$ that is linear in $g$.  For some constants $C, \nu>0$ and all $g,\check g, \check{\check g}$ with $| g - g_0|_d < \nu$, $|\check g - g_0|_d < \nu$, $|\check{\check g} - g_0|_d < \nu$, it holds that $|a(g) - a(\check g) - D(g-\check g;\check g) | \leq C(|g-\check g|_d)^2 $ and $|D(g;\check g) - D(g;\check{ \check g})| \leq C |g|_d |\check g - \check{\check g}|_d.$
Either 
{\em[1.]} There is a constant $\bar C>0$ such that  $|D(g)| \leq \bar C |g|_d$. There are $\bar \beta$ dependent on $K$ such that for $\bar g (x) = {p(x)^K}'\bar \beta$,   $\Ep[\bar g(x)^2] \rightarrow 0$ and $D(\bar g) \geq \bar C > 0.$   or
{\em[2.]} There is $\psi(x)$ with $0<\Ep[\psi(x)^2]<\infty$ such that $D(g) = \Ep[\psi(x)g(x)]$ and $D(p_{kK}) = \Ep[\psi(x)p_{kK}(x)]$ for every $k$. There is $\breve \beta$ such that $\Ep[(\psi(x) - {p(x)^K}'\breve \beta)^2] \rightarrow 0$.  Finally, $\bar V = \Ep[\psi(x)^2var (y|x,z)]$ is finite and nonzero.

\end{assumption}

\begin{assumption}[Identifiability\footnote{A stronger population condition can be substituted here.  There are square integrable classes of functions $\mathscr G \subseteq \textbf{ L}^2( \mathscr X  ), \mathscr H \subseteq \textbf{ L}^2( \mathscr Z) \subseteq \textbf{ L}^2( \mathscr X \times \mathscr Z)$ containing $g_0$ and $h_0$ respectively and that the analogue of Assumption 5 holds for all elements in $g_1,...,g_K \in \mathscr G,$ which poses a nonsingular matrix $B_K$ satisfying the condition of Assumption 2, and all projections $\pi_L$ onto the span of any collection $ h_1,...,h_L \in \mathscr H$, for all $K,L$.  }] For each $K$ and for $B_K$ as in Assumption 3, the matrix $ \Ep[ (B_Kp^K(x) - \pi_{q^L}B_Kp^K(z) ) (B_Kp^K(x) - \pi_{q^L}B_Kp^K(z))' ]$ has eigenvalues bounded uniformly away from zero in $K,L$.  In addition, $\sup_{z \in \mathscr Z} \| \pi_{q^L}B_Kp^K(z) \|_2 \leq O( \zeta_0(K))$. \end{assumption}

The objects $\zeta_0(K)$, $B_K$, $\alpha_{g_0}$, etc. characterize the regularity of the nonparametric estimation problem.  They all appear in \cite{newey:series} exactly as they do here, and the reader is referred there for more details.  
The matrix $B_K$ is a matrix that approximately orthogonalizes $p^K(x)$.  $\zeta_0(K)$ measures how large any approximate orthogonalization of $p^K(x)$ must be.   The constant $\alpha_{g_0}$ measures how well $p^K$ can approximate $g_0$ and depends on the smoothness of $g_0$. $D$ is a functional derivative of $a$.  $\psi$ is an influence function, which under regularity conditions, exists precisely when $\theta_0$ can be estimated at the $\sqrt n$ rate.  
Valid values for key quantities are stated for various cases in \cite{newey:series}.  

\begin{example}  Let $x$ be supported on $[-1,1]^r$ with density bounded away from zero. If  $p^K(x)$ are polynomials, then $\zeta_0(K) = O(K)$. If $p^K(x)$ are B-splines, then $\zeta_0(K) = O(K^{1/2})$.  For polynomials and B-splines, if $g_0$ is $m$-times continuously differentiable, $\alpha_{g_0} = m/r$ is valid. 
\end{example}


Assumption 5 imposes limitations on the dependence between $x$ and $z$.  For example, in the case that $\varphi(x) = x$ is an element of $p^K(x)$, this assumption states that the residual variation after a linear regression of $x$ on $z$ is nonvanishing.  More generally, the assumption requires that population residual variation after projecting $p_{kK}(x)$ away from $z$ is nonvanishing uniformly $K,L$.   In particular, constants cannot be freely added to both $g_0(x)$ and $h_0(z)$, requiring a user normalization condition like $g_0(0) = 0$ or $\Ep[g_0(x)] = 0.$

\subsection{High-Dimensional Estimation Conditions}

The next assumptions concern sparse high-dimensional estimation.  Assumption 7 imposes approximate sparsity for $h_0(z)$ and for projections $\varphi(x)$ onto $q^L(z)$. 
Assumption 8 is a high-level conditions on Lasso performance.  

\begin{assumption}[Sparsity] There is a constant $\alpha_{\mathscr Z}>0$, a sequence $s_0 = s_0(n) \geq 1$ and $s_0$-sparse vectors $\beta_{h_0,L,s_0}$ and $\{ \beta_{\varphi,L,s_0}\}_{\varphi \in \PhiK}$ with common support $S_0$ such that
\begin{itemize}
\item[1.]   $\sup_{z \in \mathscr Z} |h_0(z) - {q^L}(z)'\beta_{h_0,L,s_0}|  = O(L^{-\alpha_{\mathscr Z}})$.

\item[2.]   $\sup_{ \varphi \in \PhiK }  \sup_{z \in \mathscr Z} |\pi_{q^L} \varphi(z) - {q^L}(z)'\beta_{\varphi,L,s_0}| = O(L^{-\alpha_{\mathscr Z}}) .$ \end{itemize} 
\end{assumption}

\begin{assumption}[Model Selection]  There are constants $\alpha_{I_\Phi}$ and $\alpha_{\Phi}$ and bounds

\begin{itemize}
\item[1.]$\sup_{\varphi \in \PhiK }  \sum_{i=1}^n (q^L(z_i) '(\beta_{\varphi,L,s_0} - \hat \beta_{\varphi, L ,\text{\em Post-Lasso}} ))^2 $ $= O( K^{\alpha_{\Phi}} s_0\log(L))$
\item[2.]$\sum_{i=1}^n (q^L(z_i) '(\beta_{y,L,s_0} - \hat \beta_{y, L ,\text{\em Post-Lasso}} ))^2 $ $= O(  s_0\log(L))$\vspace{1mm}
\item[3.]$| I_{\RF} | = O(s_0)$\vspace{1mm} and $| I_{\PhiK } | \leq  K^{\alpha_{I_\Phi}}O(s_0) $ \vspace{1mm}
\end{itemize}
which hold with probability $1-o(1)$.  Moreover, if $\Phi_{K} \supseteq \Phi_{K,\text{Span}}$, then $\sup_{\varphi \in \PhiK}$ in \text{\em (2.)} can be weakened to $\sup_{\varphi \in \PhiK' }$ with $\PhiK'$ ranging over all nonrandom fixed $K$-element $\PhiK' \subseteq \PhiK$.
\end{assumption}

\begin{example} For finite choices $|\Phi_{K}|< \infty$, any $\alpha_{I_{\Phi}}$ satisfying $|\PhiK| = O(K^{\alpha_{I_\Phi}}) $ can be shown to satisfy Assumption 7 under regularity conditions standard in the literature,  because $\max_{\varphi \in \PhiK} |I_{\varphi,L}| \leq O(s_0)$ will hold.  Note that by similar reasoning, any $\alpha_{{\Phi}}$ satisfying $\log |\PhiK| = O(K^{\alpha_{\Phi}}) $ can be shown to satisfy Assumption 7 under regularity conditions standard in the literature.  (For example, see \cite{BCFH:Policy}, \cite{BellChenChernHans:nonGauss}). \end{example}
The main difficulty in verifying Assumption 7 with primitive conditions is in showing that $| I_{\PhiK}|$ stays suitably small.   \cite{BCFH:Policy} prove certain performance bounds for a continuum of Lasso estimates under the assumption that $\dim \PhiK$ is fixed and state that their argument would hold for certain sequences $\dim \PhiK \rightarrow \infty$.  Their bounds imply that $\alpha_{\Phi} = 1$ when $\text{dim}(\Phi_K) = K$.   \cite{BCFH:Policy} also proves that the size of the supports of the Lasso estimates, $| I_{\varphi,L} |$ stay bounded uniformly by a constant multiple of $s_0$ which does not depend on $n$ or $\varphi$.  They do not, however, prove that the size of the union $| \cup_{\varphi \in \PhiK} I_{\varphi,L} |$ remains similarly bounded.  Therefore, their results do not imply the existence of a finite value of $\alpha_{I_{\Phi}}$.    
This paper does not derive a bound for $|I_{\Phi_{K, \text{ Span}}} |$ as this would likely lie outside the scope of this project.  A valid alternative with verifiable bounds on the number of selected covariates is $$ \hat \Phi_K = \begin{cases} \Phi_{K,\text{ Simple}} \text{ on the event that } |I_{\Phi_{K,\text{ Span}}}| > t(n) \\ \Phi_{K,\text{ Span}} \text{ otherwise.} \end{cases}$$  for some increasing threshold function $t$ of $n$.    In the simulation study below, overselection is not a problem in the data generating processes considered.


 \subsection{{Conditions on Density of  $\PhiK$}}

The next assumption measures the density of $\PhiK$ in $\text{LinSpan}(p^K)$.  The density is given by the following definition.

\begin{assumption}[Density of $\PhiK$] Each $\varphi \in \PhiK$ satisfies $\text{\em var}(\varphi(x)) \leq 1$. Next, let 
$$ \rho (\PhiK) = \hspace{-2mm} \sup_{ \tiny \left \{ \hspace{-2mm} \begin{array}{c}{ g \in  \mathrm{LinSpan}(p^K)} \\ { \ \text{\em var}(g(x)) \leq 1} \end{array} \hspace{-2mm}  \right \} } \inf_{  \tiny \left \{  \hspace{-2mm}   \begin{array}{c}  \varphi = \eta_1 \varphi_1 + ... + \eta_{k_g} \varphi_{k_g} \\  k_g \geq 1\\ \eta = (\eta_1,...,\eta_{k_g} )\in \mathbb R^{k_g} \\  \varphi_1,...,\varphi_{k_g} \in \PhiK \end{array} \hspace{-2mm}\right  \} }\hspace{-2mm} \ \ \ \sup_{x \in \mathscr X} \ {{|g(x) - \varphi(x)|}}+{L^{-\alpha_{\mathscr Z}}}   \| \eta \|_1$$ 
where $\alpha_{\mathscr Z}$ is as in Assumption 6.  There is a constant $\alpha_{\rho} \geq 0$ with $\rho({\PhiK}) =O(K^{\alpha_{\rho}}L^{-\alpha_{\mathscr Z}}).$ 


\end{assumption}
This density measure has two components.  The first component measures the distance of $g$ to some linear combination $\varphi \in \Phi_K$ using the sup-norm on $\mathscr X$. The second measures the difficulty in expressing $g$ as a linear combination of elements of $\Phi_K$.  This is done using an $L^{-\alpha_{\mathscr Z}}$-normalized $\ell_1$ norm; note that $\varphi= \eta_1 \varphi_1 + ... + \eta_k \varphi_k$ is distant from $g$  if $L^{-\alpha_{\mathscr Z}}\| \eta \|_1$ is large.  The normalization $L^{-\alpha_{\mathscr Z}}$ aligns the scale of $\| \eta \|_1$ to match the sparse approximation error $L^{-\alpha_{\mathscr Z}}$. 
Note that $\rho(\PhiK)$ is as an optimal Lasso objective function value from an $\ell_\infty$-regression of $g$ onto all of $\Phi_K$.
The densest possible $\PhiK$ is generally attained when $\rho(\PhiK) = O(L^{-\alpha_{\mathscr Z}})$ because $\| \eta \|_1$ will generally be at least bounded below by a constant. 

The assumption that $\text{var}(\varphi(x)) \leq 1$ is an analytically convenient normalization.  For first stage model selection procedures which are invariant to rescalings of $\varphi(x)$, there is no loss of generality arising from the fact that $\text{var}(\varphi(x))$ is unobserved in practice.\footnote{Working instead with an assumption on $\sup_x \| \varphi(x) \|_2$ is equally possible, but would add additional notation to the argument because one would need to track the quantity $\sup_x \| \varphi(x) \|_2 / \text{var}(\varphi(x))$ for every $\varphi \in \Phi_K$.}

\begin{example} If for all $g$, there is $\varphi \in \PhiK$ with $\sup_{x\in \mathscr X} | g(x) - \varphi(x) | \leq L^{-\alpha_{\mathscr Z}}$, then $\rho(\PhiK) \leq L^{-\alpha_{\mathscr Z}}$.  In particular, $\rho(\Phi_{K,\text{Span}}) \leq L^{-\alpha_{\mathscr Z}}$.   On the other hand, $\rho(\Phi_{K,\text{Simple}})= O(L^{-\alpha_{\mathscr Z}} K^{1/2})$. 
\end{example}

The quantity $\rho(\PhiK)$ is useful in controlling the accumulation of sparse approximation error when $\pi_{q^L}g(z)$ is approximated by $\eta_1 q^L(z)\beta_{\varphi_1,L,s_0} + ... + \eta_k q^L(z)\beta_{\varphi_{k_g},L,s_0}$.  In univariate Post-Double Selection, such control is never necessary.  As a result, $\rho(\PhiK)$ is a newly introduced concept for this analysis.  An important additional note is \cite{BCFH:Policy} require $\beta_{v,L,s_0}$ to approximate the conditional expectation of $v$ given $z$ rather than just the linear projection when studying a continuum of Lasso regressions.\footnote{The arguments generating Lasso estimation performance bounds in \cite{BellChenChernHans:nonGauss}, in which the heteroskedastic Lasso described here was first proposed, are valid under a sparsity assumption on linear projection coefficients. }
 Their requirements would be much more demanding of $q^L(z)$ in the current context.  For example, $\Ep[x|z]$ having an $s$-sparse linear expansion in terms of $q^L(z)$ does not imply $\Ep[\varphi(x)|z]$ has an $s$-sparse linear expansion.  In fact,  $\Ep[x|z]$ having a general (nonsparse) linear approximation (up to $L^{-\alpha}$-approximation error) does not imply that $\Ep[\varphi(x)|z]$ has a general (nonsparse) linear approximation for arbitrary $\varphi$.  

\subsection{Additional Technical Regularity Conditions}




\begin{assumption}[Sparse Eigenvalues]
For $m\geq 1$ let $\kappa_{\min}(m)(M)$, $\kappa_{\max}(m)(M)$ be minimal and maximal $m$-sparse eigenvalues of the matrix $M$.\footnote{ More explicitly, $\kappa_{\min} (m)(M) := \min_{1 \leq \|\delta \|_0 \leq m} {\delta ' M \delta }/{\|\delta\|_2^2}, \  \kappa_{\max} (m)(M) := \max_{1 \leq \|\delta \|_0 \leq m} {\delta ' M \delta }/{\|\delta\|_2^2}.$} There is $s_{\kappa} = s_{\kappa}(n)$ with $s_0K^{\alpha_{I_\Phi}} = o(s_\kappa)$,   $\kappa_{\min}(s_\kappa)(n^{-1}Q' Q)^{-1}, \  \kappa_{\max}(s_{\kappa})(n^{-1}Q' Q ) =O(1)$ with probability $1-o(1)$.
\end{assumption}

\begin{assumption}[Moment Conditions]  The following moment conditions hold.
\begin{itemize}
\item[1.]  $\Ep[ q_{jL}(z)^2 [B_K(p^K(x) - \pi_{q^L}p^K(z) )]_k^2 ]$ is bounded away from zero uniformly in $K,L$
\item[2.]  $\Ep[|q_{jL}(z)|^3]$ $+(\Ep[q_{jL}(z)^2 \varepsilon^2])^{-1}$   $+\Ep[|q_{jL}(z)|^3| \varepsilon|^3]$    is bounded uniformly in $L$
\item[3.]  $ \Ep[\varepsilon^{4+\delta}|x,z] $ is bounded for some $\delta>0$.  $\text{\em var}(\varepsilon|x,z)$ is bounded away from zero.
\end{itemize}
\end{assumption}

\begin{assumption}[Rate Conditions]  The following rate conditions hold.
\begin{itemize}
\item[1.]$ \textcolor{black}{L^{-\alpha_{\mathscr Z}} \Big [ n^{1/2}  K^{1/2+\alpha_\rho} \zeta_0(K)^2 \log (L) \Big ] = o(1)} \phantom{\Bigg |}$
\item[2.] $\textcolor{black}{n^{-1/2}\[ (n^{ 2/(4+\delta)} + n^{1/6}) \hspace{.5mm} s_0 \hspace{.5mm}  K^{1+ 2\alpha_\rho + \alpha_{\Phi} + \alpha_{I_{\Phi}}/2} \zeta_0(K) \log(LK) \]= o(1)} $
\item[3.] $ n^{1/2}K^{-\alpha_{g_0}}   =o(1).\phantom{\Bigg |}$
\end{itemize}
\end{assumption}




\comment{
\textcolor{blue}{\textbf{Construction of Rate Conditions for Revision}
The next two assumptions need to follow from Assumption 11 and $s_0K^{\alpha_{I_{\Phi}}} = o(s_\kappa)$ from Assumption 9 and $n^{-1} \zeta_0(K)^2 K \rightarrow 0$ from Assumption 2.
Assumption 12 from previous version
\begin{itemize}
\item[1.] $s_0K^{\alpha_{I_\Phi}} = o(s_\kappa)$  \textcolor{green}{\Large \checkmark}
\item[2.]$\log(KL) = o(\zeta_0(K)^{-1}n^{1/3})$ \textcolor{green}{\Large \checkmark}
\item[3.]$ L^{-\alpha_{\mathscr Z}} n^{1/2} K^{-1/2} \zeta_0(K) = O(1)$    \textcolor{green}{\Large \checkmark}
\item[4.]$ L^{-2\alpha_{\mathscr Z}} K^{2\alpha_{\rho}}( K^{1/2}n^{1/2} \zeta_0(K)^{-1} + n^{1/2} + K \log(L)^{1/2} \zeta_0(K)^{-2} )= O(1)$ \textcolor{green}{\Large \checkmark}
\item[5.] $n^{-1/2}K^{1/2}s_0 \log(L) \zeta_0(K)^{-1} (K^{ 2 \alpha_{\rho} + \alpha_{\Phi}} + K^{\alpha_{\rho} + \alpha_{\Phi}/2 + \alpha_{I_\Phi}/2}) =O(1)$ \textcolor{green}{\Large \checkmark}
\item[6.] $n^{-1/2} s_0^{1/2} \log(L) (K^{2 \alpha_{\rho} + \alpha_{\Phi} }s_0^{1/2} + K^{\alpha_{I_\Phi}/2} )= O(1)$. \textcolor{green}{\Large \checkmark}
\end{itemize}
Assumption 16 from previous version
\begin{itemize}  
\item[1.] $L^{-2\alpha_{\mathscr Z}}K^{2\alpha_{\rho}}(\zeta_0(K)K + \zeta_0(K)^4K^{1 - 2\alpha_{\rho}}+ K \log(L)+ n^{1/2} ) = o(1)$ \textcolor{green}{\Large \checkmark}
\item[2.] $n^{-1}s_0 \zeta_0(K)\log(L) (K^{1 + 2\alpha_{\rho} + \alpha_\Phi} + K^{1 + \alpha_\rho + \alpha_{\Phi}/2 + \alpha_{I_\Phi}/2})=o(1)$ \textcolor{green}{\Large \checkmark}
\item[3.] $n^{-1}s_0^2 \log(L)^2 (K^{4\alpha_{\rho} + 2\alpha_\Phi} + K^{2\alpha_\rho +  \alpha_{\Phi} + \alpha_{I_\Phi}})=o(1)$ \textcolor{green}{\Large \checkmark}
\item[4.] $s_0 K^{\alpha_{I_{\Phi}}} \(n^{-1/2} \zeta_0(K) K^{1/2} + K^{-\alpha_{g_0}}\) = o(1)$ \textcolor{green}{\Large \checkmark}
\item[5.]  $ n^{2/(4+\delta)}\zeta_0(K)n^{-1/2}K^{1/2} =o(1).$   \textcolor{green}{\Large \checkmark}
\end{itemize}
}
}

Assumption 9 ensures that the sparse eigenvalues remain well-behaved for all $n$ with high probability.  Informally it states that no small subset of covariates in $q^L$ suffers a multicollinearity problem.  This assumption is commonly used in econometrics and can be shown to hold under more primitive conditions (see \cite{BC-PostLASSO},  \cite{ZhangHuang2006}).  
  Assumption 10 contains mild moment conditions.  Assumption 11 contains rate conditions.\footnote{Assumptions 11.1 and 11.2 are much simplified versions of the rate conditions actually needed in proving the theorems that follow.  Sharper, but more complicated rate conditions are stated in Appendix B in the Supplement. }  Assumption 11.1 is satisfied if $L^{-\alpha_{\mathscr Z}}$, the sparse approximation error, vanishes sufficiently quickly relative to $n$ (and other quantities).  Assumption 11.2 is satisfied when quantities involving $K$ and $s_0$ grow sufficiently slowly relative to $n$.   Assumption 11.3 is an undersmoothing condition, ensuring estimation variance for $\theta_0$ dominates smoothing bias for $\theta_0$, allowing a Gaussian limit as given in Theorem 2.   There is some tension between Assumptions 11.2 and 11.3 because the former requires $K$ to be suitably small relative to $n$ while the latter requires $K$ to be large.  Example 4 shows that possible rates for $K$ are nonempty provided $g_0$ is sufficiently smooth.  

\begin{example}
Suppose $x$ is supported on $[-1,1]$ with density bounded away from zero, $p^K(x)$ are B-splines, $g_0$ is $m$-times continuously differentiable, $ s_0$ is bounded by a polynomial in $\log( n)$, $L$ is bounded by a polynomial in $n$, and $\delta = 8$.  Then Assumptions 11.2 and 11.3 are simultaneously feasible (i.e., there exists a sequence $K$) provided  $m > \frac{3}{2}( \frac{3}{2} + 2\alpha_\rho + \alpha_{\Phi} + \frac{1}{2}\alpha_{I_\Phi})$.
\end{example}


\subsection{Convergence Theorems for Post-Nonparametric Double Selection Estimates}


The first theorem gives convergence rates for $\hat g$ that match those in \cite{newey:series}.   Recall that $\zeta_d(K) = \max_{|c| \leq d } \sup_{ x \in \mathscr X} \| \partial^{|c|} B_K p^K(x)/\partial x^c \|_2 .$  Theorem 1 can be interpreted as verifying that the nonparametric estimation rates derived in \cite{newey:series} continue to hold under a reduced dictionary when Post-Nonparametric Double Selection is used.  Importantly, the bounds below do not depend on $L$.

\begin{theorem}Under Assumptions 1--10 and 11.1--11.2, the Post-Nonparametric Double Selection estimate $\hat g$ for the function $g_0$ satisfies the following bounds.
$$\int (\hat g(x) - g_0(x))^2 dF_0(x) = O_p(n^{-1}K + K^{-2\alpha_{g_0}}).$$
$$|\hat g - g_0 |_d = O_p( n^{-1/2}\zeta_d(K) K^{1/2} + K^{-\alpha_{g_0}}).$$
\end{theorem}


Theorem 2 proves asymptotic normality results for $\hat \theta$.   This theorem justifies approximate Gaussian inference for functionals $\theta_0$ when the conditions in Assumptions 1--12, and particularly, the undersmoothing condition $n^{1/2} K^{-\alpha_{g_0}} = o(1)$, are satisfied.

\begin{theorem} Under Assumptions 1--3, 4.1, and  5--11, the Post-Nonparametric Double Selection estimate for the function $\theta_0$ satisfies  $\hat \theta = \theta_0 + O_p(n^{-1/2}\zeta_d(K)).$  In addition,
\begin{align*}
&n^{1/2} V^{-1/2} (\hat \theta - \theta_0) \overset{d} \rightarrow \N(0,1) \ \ \ \text{and}  \ \ \ n^{1/2} \hat V^{-1/2} (\hat \theta - \theta_0) \overset{d} \rightarrow \N(0,1).
 \end{align*}

\noindent Under Assumptions 1--3, 4.2, and 5--11,  \ 
$n^{1/2}(\hat \theta - \theta_0) \convdist \mathrm{N}(0,\bar V) \ \ \text{and}  \ \ \  \hat V - \bar V \convprob 0.$
\end{theorem}

Inspection of the proof shows that convergence results hold uniformly over $h_0$ and $q^L$ satisfying Assumptions 1--12 under a fixed set of implicit $O$ constants and $o$ sequences.  This paper does not explicitly expound this property for two reasons.  Foremost, the current framework is already broad enough to encompass situations in which model selection mistakes are generic (as $q^L$ depends on $L$).  This is the primary practical concern in post-model selection inference problems.  Second, dealing with sequences $h_{0,n}$ or infinumns $h_0 \in \mathscr H$ would complicate the already notationally heavy exposition.  Note that in contrast, \cite{BCH-PLM} do explicitly state uniformity over a class of functions $h_0 \in \mathscr H$.  The proof is not sufficiently precise to immediately extend to  uniformity statements over functions $g_0$.

Improvements (e.g. \cite{bell:chern:chet:kato:series} and \cite{chen:christiansen:series}) in theoretical analysis of (low-dimensional) series estimators since \cite{newey:series} have relaxed assumptions and given sharper convergence rates.  Whether the theoretical analysis here can be improved to be compatible with theoretical innovations in the previously two cited papers is unknown and an interesting avenue for future research.  

\section{Discussion of Tuning Parameters and Alternative Estimation Strategies}

\subsection{Tuning Parameters}The most important practical tasks involved in any nonparametric estimation problem is calculating sensible tuning parameters from data.  This section recaps and discusses all of the choices of tuning parameters required to implement the entire Post-Nonparametric Double Selection procedure:    
1) choice of $K$ and  $p^K(x)$,  
2) choice of $L$ and  $q^L(z)$,
3) choice of $\Phi_K$, and 
4) choice of Lasso parameters $\lambda_{\RF}$, $l_{\RF,j}$ and $\lambda_{\varphi}$, $l_{\varphi,j}$.  Specific implementation details are available in Appendix A in the Supplement.

This paper considers B-spline bases for $p^K$.  In the simulations below, this paper uses a data-dependent $\hat K$ which is set to $\hat K = \lfloor \hat K_0 \log_{10}(n) \rfloor$, and $\hat K_0$ is a minimizer of BIC.  Appendix A in the Supplement gives further motivation and implementation details for the exact procedure.
A choice of $q^L(z)$ in cases in which $z$ is intrinsically high-dimensional can be taken $q^L(z) = z$.

Choosing $\Phi_K$ optimally is an important problem, which is similar to the problem of dictionary selection, for which the answer may differ by application.  The Span option, $\Phi_{K,\text{ Span}}$, is used in the simulation study as well as in the empirical example.  As mentioned above, the set $\Phi_{K,\text{Span}}$ is invariant to different orthogonalizations of $p^K(x)$.  In addition, $\Phi_{K,\text{Span}}$ performed well in initial simulations.   Several alternative options are possible. These include $\Phi_{K,\text{ Simple}} = \{p_{1K}(x),...,p_{KK}(x)\}$, which corresponds to a direct application of \cite{BCH-PLM}, which formally forgets that $K \rightarrow \infty$.\footnote{An earlier version of this manuscript worked exclusively with $\Phi_{K, \text{Simple}}$.}  Another possible option is $\Phi_{K,\text{ Multiple}}=\{p_{1K}^{(1)}(x),...,p_{KK}^{(1)}(x)\} \cup  ... \cup  \{p_{1K}^{(m)} (x),...,p_{KK}^{(m)}(x)\} \}$ corresponds to using extending $\Phi_{K, \text{Simple}}$ to multiple dictionaries (which are not necessarily used in estimation of $g_0$), indexed $(1),...,(m)$.  For example $\Phi_{K, \text{ Multiple}}$ could include the union of B-splines, orthogonal polynomials, and trigonometric polynomials, all in the first stage selection.  

In order to set up a practical choice of penalty levels, the set proposed above, $\Phi_{K,\text{Span}}$ is considered as a union\footnote{Some dictionaries $p^K(x)$ may not contain a term $p_{kK}(x) = x$.  In this case, $\varphi(x) = x$ can be appended to $\Phi_K$.  In addition, after rescaling, $\PhiK_1 \subseteq \PhiK_2 \subseteq \PhiK_3$.  This causes no additional problems.}: 
$\Phi_{K,\text{Span}} = \PhiK_1 \cup \PhiK_2 \cup \PhiK_3$
where 
$\PhiK_1 = \{ x \}$, 
$\PhiK_2 = \{ p_{1K}(x),...,p_{KK}(x) \}$, 
$\PhiK_3 = \{ \varphi(x) \in \text{LinSpan}(p^K(x)) : \text{var}(\varphi(x) ) \leq 1 \}.$
The reason then for decomposing $\Phi_{K,\text{Span}}$ in this way is to allow the use of different penalty levels on each of the three sets $\PhiK_1, \PhiK_2, \PhiK_3$.  In particular, $\lambda_{\PhiK_1}$ is the penalty for a single heteroskedastic Lasso as described in \cite{BellChenChernHans:nonGauss}.  $\lambda_{\PhiK_2}$ is a penalty which adjusts for the presence of $K$ different Lasso regressions with $K \rightarrow \infty$.   The main proposed estimator sets $\lambda_{\Phi_{K3}} = \lambda_{\Phi_{K2}}$.  This is less conservative than the higher penalty level given in \cite{BCFH:Policy}, who also estimate a continuum of Lassos.\footnote{Note, the normalization that $\|\varphi (x)\|_2 \leq 1$ ensures that $\Phi_{K3}$ is indexed by a compact set and so $\lambda_{\PhiK_3}$ can chosen as described in \cite{BCFH:Policy} to account for a continuum of Lassos.} As a result, any corresponding Lasso performance bounds do not hold uniformly over $\Phi_{K3}$.  Rather, the implied bounds hold only uniformly over any prespecified $K$ element subsets of $\Phi_{K3}$.  This is nonetheless  sufficient for the assumptions in the last section. The simulation study does compare a more conservative (higher) choice for $\lambda_{\Phi_{K3}}$ and finds no substantial difference in inference quality in the data generating processes considered.


The results in this paper are not strong enough to classify all procedures that satisfy Theorem 2 under the above regularity conditions.  Not all procedures with two selection stages work.    For example, consider the two-step procedure that depends on an initial estimate\footnote{Such an estimate may be obtained, e.g., from a Lasso regression $y$ on $(p^K(x), q^L(z))$, setting the penalty loadings over the terms $p^K$ identically 0.} $\check g$ of $g_0$ and that proceeds as Algorithm 1 with the exception that $I_{\check g(x),L}$ replaces $I_{\PhiK}$.  This procedure is simpler and does not require $\PhiK$.  Heuristically,  asymptotic normality would fail in cases in which $j \in I_{p_{kK}(x),L}$ but $j \notin I_{\check g(x),L}$ with high probability for many $j \leq L$.  Such cases arise if $\check g(x) = p^K(x)' \check \beta_g$ with $\max_{k \leq K}[\check \beta_g]_k = o_p(1)$ sufficiently quickly.
\footnote{There may be potential approaches that refine an initial post-nonparametric double selection estimate from Algorithm1 to depend less on $\PhiK$.   
Let $ p_{a}^K$ be a change of basis $p^K$ such that the resulting $ \frac{\partial a( p_a^K(x) ' b) }{\partial b}([ \hat \beta_{y,(p^K_a, \tilde q)}]_{g}) = \textbf{e}_1$, a vector with 1 on the first entry and 0 elsewhere.   Estimating a univariate post-double selection on the first component of the new orthogonalized dictionary results in an estimate of $\theta_0$, which does not require $\PhiK$ in reestimation.}  

\section{Simulation study}


The following simulations illustrate implementation and investigate finite-sample properties of Post-Nonparametric Double Selection.  
The simulation is divided into two parts.  The first compares alternative estimators to Post-Nonparametric Double Selection.   
The second compares Post-Nonparametric Double Selection estimates using different choices for $\Phi_K$.  

\vspace{2mm}
The following process generates the data in each simulation.
{ \onehalfspacing
\begin{align*}\label{sim: true}
& \quad \quad \quad \quad \quad \quad \quad \quad y = g_0(x) + h_0(z) + \varepsilon \textcolor{white}{\Bigg |}\\ 
&g_0(x) = 10 \sin ( 0.1x) -0.5\sin(4\pi x 4^{-x^2})\\ 
&h_0(z) = z' \beta_{h_0,L},   \ \beta_{h_0,L,j} = -0.5 \cdot (-0.65)^{j-1} \textbf{1}_{j \leq s_0} \\
&z_j \sim \N(0,1), \  \ \text{corr}(z_{j_1},z_{j_2}) = 0.25^{|j_1-j_2|} \\
&\varepsilon \sim \N(0,1) \\
&x = 0.15v + 0.0375 - 3.75(\text{stair}(z' \gamma_{0} + v ) + 0.375)F_{\N(0,1)}(10z_{s_0})... \\ & \ \ \ \ \ \ + 3.75(\text{stair}(0.5(z' \gamma_{0} + v)|z'\gamma_0 + v|^{0.25} )( 1-F_{\N(0,1)}(10z_{s_0}) ) \\ 
& \gamma_{0,L,j} = -1.5  (-0.75)^{j-1} \textbf{1}_{j \leq s_0}  , \ v \sim \N(0,1) \\
&\text{stair}( \ \cdot \ ) = 0.25 \frac{ \tanh(12(\  \cdot \ )/2.5) - 12 \lfloor( \ \cdot \ ) /2.5 \rfloor - 6}{2 \tanh( 6) + 0.5 + \lfloor( \ \cdot \ ) /2.5 \rfloor }.
\end{align*}}

\vspace{-5mm}
The study performs 1000 simulation replications for sample size $n \in \{100,150,...,500 \}$ and dimensionality $L \in \{n/2,2n\}$.  Sparsity is set to $s_0 = 6$.   

The data generating process is quite complicated.  It is designed in order to create correlations between the covariates $z$ and various transformations of $x$.  This allows the data generating process to highlight many different statistical problems which can arise using Nonparametric-Post Double Selection and alternative estimation techniques all in one simulation study.  Despite the complicated formulas for the joint distribution of $x$ and $z$, their realizations appear natural. Scatter plots of one sample of $n=500$ showing the respective bivariate distributions between $z_1,...,z_6$ and $x$ are provided in Figure 6.  Figure 5 provides a picture of the graph of $g_0$.
The simulations evaluate estimation of $g_0$ and of $\theta_0$ defined by
 \vspace{-2mm}
 $$\theta_{0} =  \Ep_1[g_0'(x)].$$ 
\noindent The expectation $\Ep_1$ is defined over the uniform probability distribution on the finite set $\{F_{\mathsf N(0,1)}^{-1}(0.01) , \ F_{\mathsf N(0,1)}^{-1}(0.02) , \  ... \ , \  F_{\mathsf N(0,1)}^{-1}(0.99) \}$ where $F_{\mathsf N(0,1)}$ is the standard Gaussian cdf.\footnote{Relative to studying $\Ep[g_0'(x)]$ (with expectation with respect to the unknown distribution of $x$), this choice of functional abstracts away complications arising from the fact that the distribution of $x$ may be unknown.}    

\subsection{Simulation Study Part I:  Comparison of Alternative Estimators}  The first part of the simulation study considers the performances of five estimators\footnote{There are likely other sensible estimators beyond the 5 considered in the simulation section.  As pointed out by an anonymous reviewer, such estimators may include propensity score matching on a continuous variable.  Though such an approach may work well, the context here is not exactly the same as usually seen in propensity score matching.  In particular, the assumptions here do not require unconfoundedness conditions.  In addition, propensity score techniques are most commonly applied to discrete treatement variables.  There is some work on propensity score matching with a continuous treatment; for example, see \cite{imbens:prop:matching:continuous}, who require the estimation of the conditional density of treatment.  In the high-dimensional setting, estimating the conditional density of $x$ given $z$ would introduce complications beyond the scope of this paper.}     for $g_0$ and $\theta_0$.  Each estimator is a reduced series estimator based on initial dictionaries consisting of a cubic spline expansion $p^K(x)$ for $g_0(x)$ and a linear expansion $q^L(z) = z$ for $h_0(z)$.  Detailed implementation descriptions are provided in Appendix A in the Supplement.

\begin{itemize}
\item[1.] \textbf{Oracle.}  Sets $\tilde q(z) = (z_1,..,z_{s_0})$.  This estimator is an infeasible benchmark for comparison to estimates in which the correct support is known.  
\item[2.] \textbf{Span PND.}  Selects $\tilde q(z)$ using Post-Nonparametric Double Selection with $\Phi_K = \Phi_{K,\text{Span}}$ given by the Span option. 
\item[3.] \textbf{Naive.}  Selects $\tilde q(z)$ in one model selection step using Lasso of $y$ on $q^L(z)$.
\item[4.] \textbf{OLS.}  Sets $\tilde q(z) = z$; no dictionary reduction (only calculated provided $L<n.$)
\item[5.] \textbf{TU.}  Implements an alternative inferential procedure for dense functionals of high-dimensional parameters; Targeted Undersmoothing TU(1). 
\end{itemize}


Comparison of estimators 1--4 is standard in the post-model selection econometrics literature.  The oracle estimator should be seen as a benchmark which is known to provide good estimates if the true set, $S_0$, was known.  The Naive estimator is expected to perform poorly since it is not a uniformly valid estimator and susceptible to size-distortions arising from model selection mistakes.   OLS is expected to perform poorly due to potential problems related to overfitting.  
Estimator 5 is a procedure called \textit{Targeted Undersmoothing} which looks to correct distortions in inference from model selection mistakes.  Targeted Undersmoothing appends covariates which significantly affect the value of the functional $\hat \theta = a(\hat g)$ to an initially selected model (see \cite{TU}).  It is appropriate for functionals of high-dimensional models which depend on a growing number of parameters (dense functionals) and is therefore a potentially sensible procedure for inference for $\theta_0$.  

The simulation results report several quantities which measure the performance of each estimator.  The results report standard deviation of the estimates $\hat \theta$, bias of the estimates for $\theta_0$, confidence interval length for estimates for $\theta_0$, rejection frequencies under the null for $\theta_0$ at the 5\% level, mean number of series terms $K$ used, mean number of series terms selected from the original $L$, and integrated squared error for $g_0$.  The simulation results are reported in Figure 1 for $L=n/2$ and Figure 2 for $L = 2n$.  The figures display the above-mentioned simulation results for each $n=100,...,500$ with $n$ changing over the horizontal axis.\footnote{Note that since $s_0$,  the magnitude of coefficients $\beta_{h_0,L}$ and the joint distribution between relevant covariates are all fixed in the simulations as $n \rightarrow \infty$.  Therefore, for sufficiently large $n$, all relevant covariates would be identified with high probability, and all of the post-model selection estimators would perform similarly.  This simulation study therefore is identifying differences in finite sample performance.}     Note also that across some of the estimators, some of the reported quantities will be identical.  For example, the point estimates for TU are identical to the Naive point estimates.  The selected $K$ is identical for the Naive and Post-Nonparametric Double Selection estimates.

In all of the simulations, the Post-Nonparametric Double Selection estimates behave similarly to the Oracle estimates.  The OLS estimates have wide confidence intervals relative to the Post-Nonparametric Double Selection estimation, but have similar coverage properties.   The final estimator, TU, is conservative in terms of coverage, with substantially larger intervals in every case.

On the other hand, the Naive estimator has poor coverage properties.  For the Naive estimator, after failing to control for the correct covariates, the increase in $K$ leads to an increasing bias.  This highlights the fact that simply producing undersmoothed estimates of $g_0$ by increasing $K$ may not be adequate for reducing bias and making quality statistical inference possible in the high-dimensional setting.

\subsection{Simulation Study Part II:  Comparison of Alternative $\PhiK$}

The second part of the simulation study compares four Post-Nonparametric Double Selection estimators that use different specifications for $\PhiK$.

\begin{itemize}
\item[1.] \textbf{Span PND.}   Post-Nonparametric Double with $\Phi_{K,\text{Span}}$.
\item[2.] \textbf{Conservative Span PND.}  Uses $p^K$ and $\Phi_K$ as in the Span option, but in the decomposition $ \Phi_{K,\text{Span}} = \PhiK_1 \cup \PhiK_2 \cup \PhiK_3$, the penalty applied to $\PhiK_3$ is more conservative, explicitly aimed at achieve Lasso performance bounds that hold uniformly over all of $\PhiK$.
\item[3.] \textbf{Simple PND.}   Post-Nonparametric Double with $ \Phi_{K,\text{ Simple}}$.
\item[4.] \textbf{Alternative Spline Basis Simple PND.}   Post-Nonparametric Double with a different basis for selection.  A QR decomposition is applied to $P$ in order to obtain orthonormal columns.  Next, $\Phi_K = \Phi_{K,\text{ Simple}}$ is used on the new orthogonalized data.  Importantly, the new $P$ spans the same $K$-dimensional linear space in $\mathbb R^n$ as in the 3 previous estimators.
\end{itemize}

The estimates for the second part of the simulation are presented in Figures 3--4.  Note that all estimators are identical with regard to $K$, hence, only one curve is visible in the corresponding plots.  In addition, the Conservative Span and Span estimators have very similar performance in terms of standard deviation, bias, interval length, rejection frequency, and integrated squared error.  The two estimators are practically indistinguishable except in terms of the number of elements of $q^L$ they select.  They do not give numerically identical estimates or confidence intervals.  Their differences are too small to be seen in Figures 3--4.

There are noticeable differences in the performance of the  estimators.  The Span option is able to identify the highest number of relevant covariates, followed by the Conservative Span option, the Simple option, and the Alternative Spline Basis Simple option.  The Span, Conservative Span, and Simple Post-Nonparametric Double Selection procedures exhibit favorable finite sample properties for this data generating process.  In particular, for those estimators, the calculated rejection frequencies move toward 5\% as $n$ increases.  
By contrast, the Alternative Spline Basis Simple Post-Nonparametric Double Selection procedure has very poor finite sample performance.  It is unlikely that the projection of the new orthogonalized basis onto $q^L$ has a good sparse representation.  This causes increased model selection mistakes in the first stage.  Unlike in the partially linear model, these mistakes can accumulate to cause more severe bias as the number of first stage selection steps is growing with $K$.  Note that the Alternative Spline Basis estimator has similar performance to the Naive estimator in the first part of the simulation study.
The Span and the Conservative Span options offer an opportunity to potentially add robustness.  There is no evidence from this simulation study that using the Span option overselects conditioning variables to the extent that rejection frequencies become severely distorted or variability increases to an undesirable level relative to other options.





\begin{figure}[h]
\caption{Simulation study design details} 
\flushleft \singlespacing
\includegraphics[scale = .10]{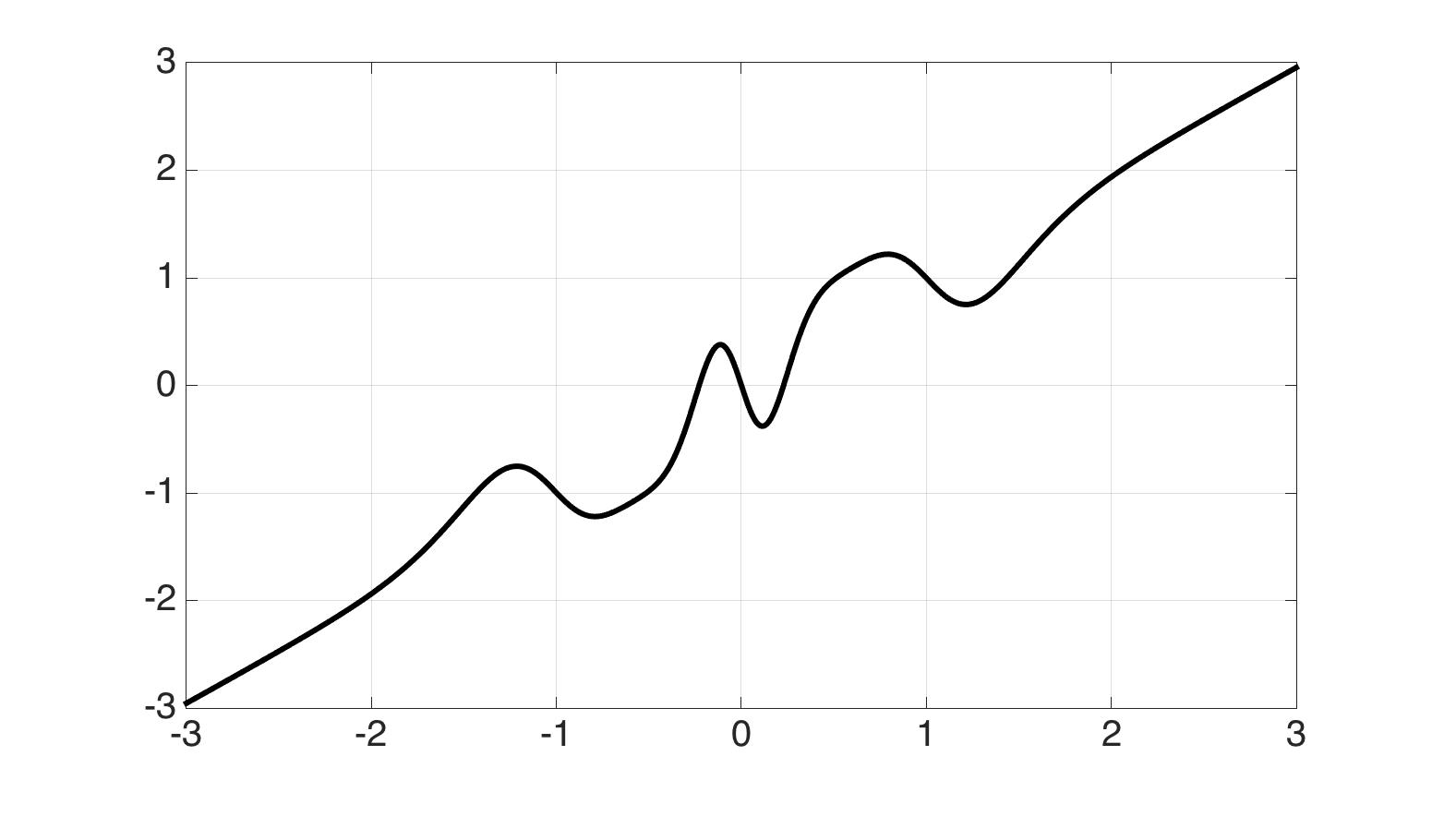}  \includegraphics[scale = .12]{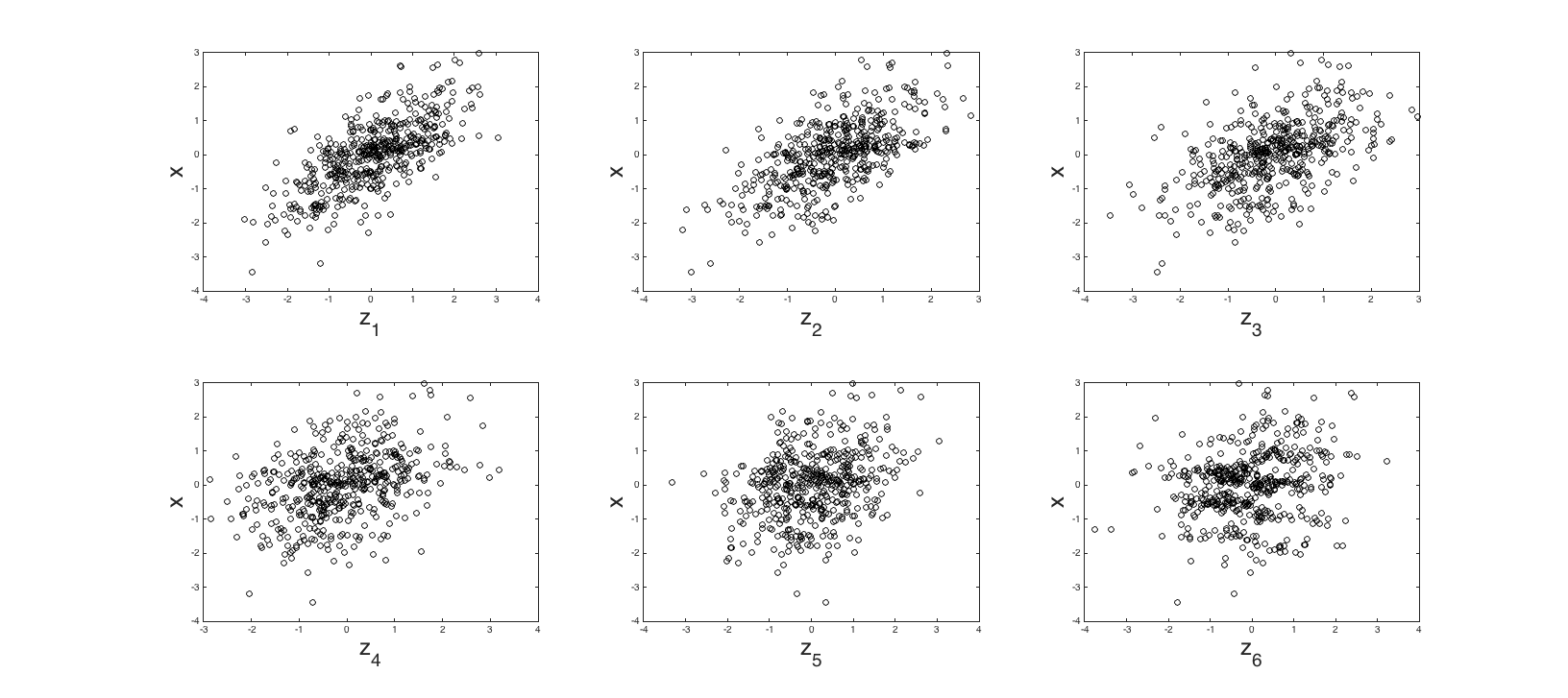}
{  \footnotesize 
\center{The left panel depicts the function $g_0$ used in simulation.  The right panel depicts the joint distribution between $x$ and the first $s_0 = 6$ covariatest.  The plots are generated by one sample of size $n=500$.}}
\end{figure}

\singlespacing

\begin{landscape}
\begin{multicols}{2}

\vspace{-1mm}
 \begin{figurehere}
 \caption{Simulation Results}

\


\includegraphics[scale=.29]{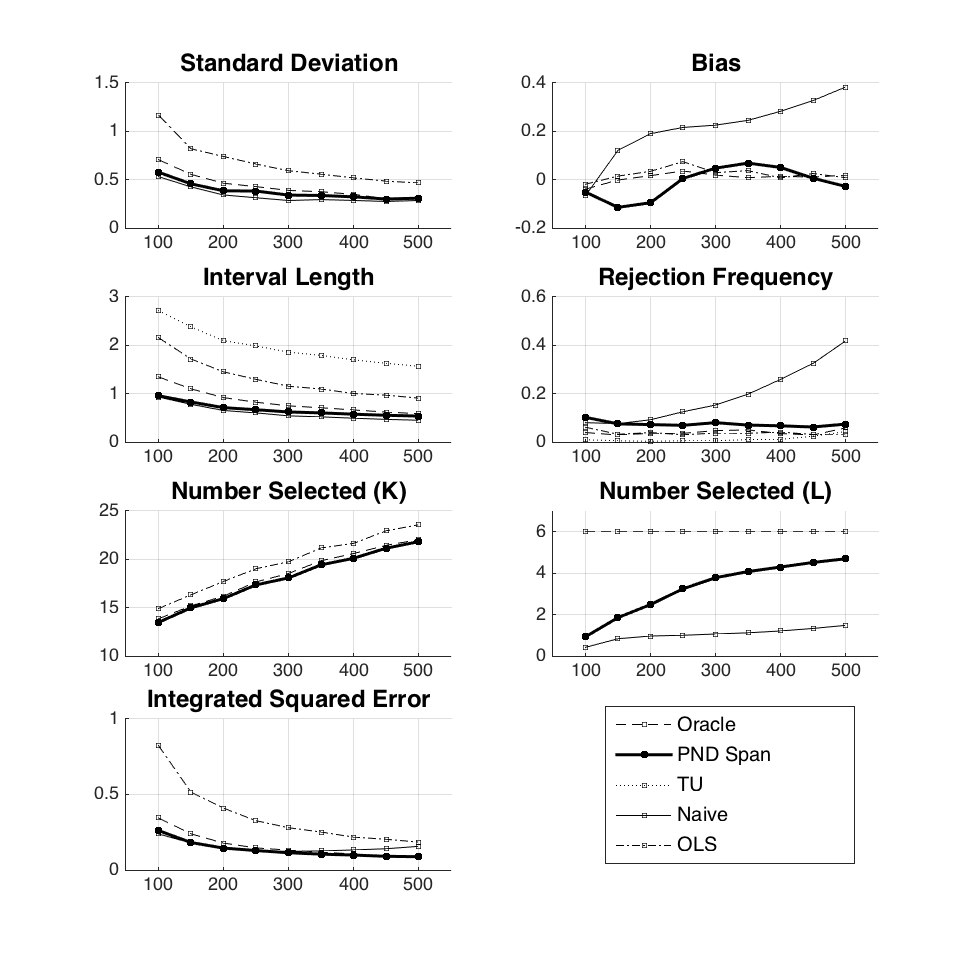} 
\scriptsize
Simulation results for the estimation of $g_0$ and $\theta_0$ with $n=100,150,...,500$ with $s_0 = 6$ and $L=n/2$.  Estimates presented for estimators: Oracle, Post-Nonparametric Double (PND Span), Naive, OLS, and Targeted Undersmoothing (TU).  Plot 1: standard deviation of the estimates for $\theta_0$.   Plot 2: bias of the estimates for $\theta_0$. Plot 3:  confidence interval length for estimates for $\theta_0$.  Plot 4: rejection frequencies under the null for $\theta_0$ for a 5\% level test.   Plot 5:  mean number of series terms $K$ used.  Plot 6:  mean number of series terms from $L$ selected.  Plot 7:  root mean integrated squared error for $g_0$.   Figures are based on 1000 simulation replications.  $n$ is indexed by the horizontal axis.

\


 \end{figurehere}

\

\begin{figurehere}
\caption{Simulation Results}

\


\includegraphics[scale=.28]{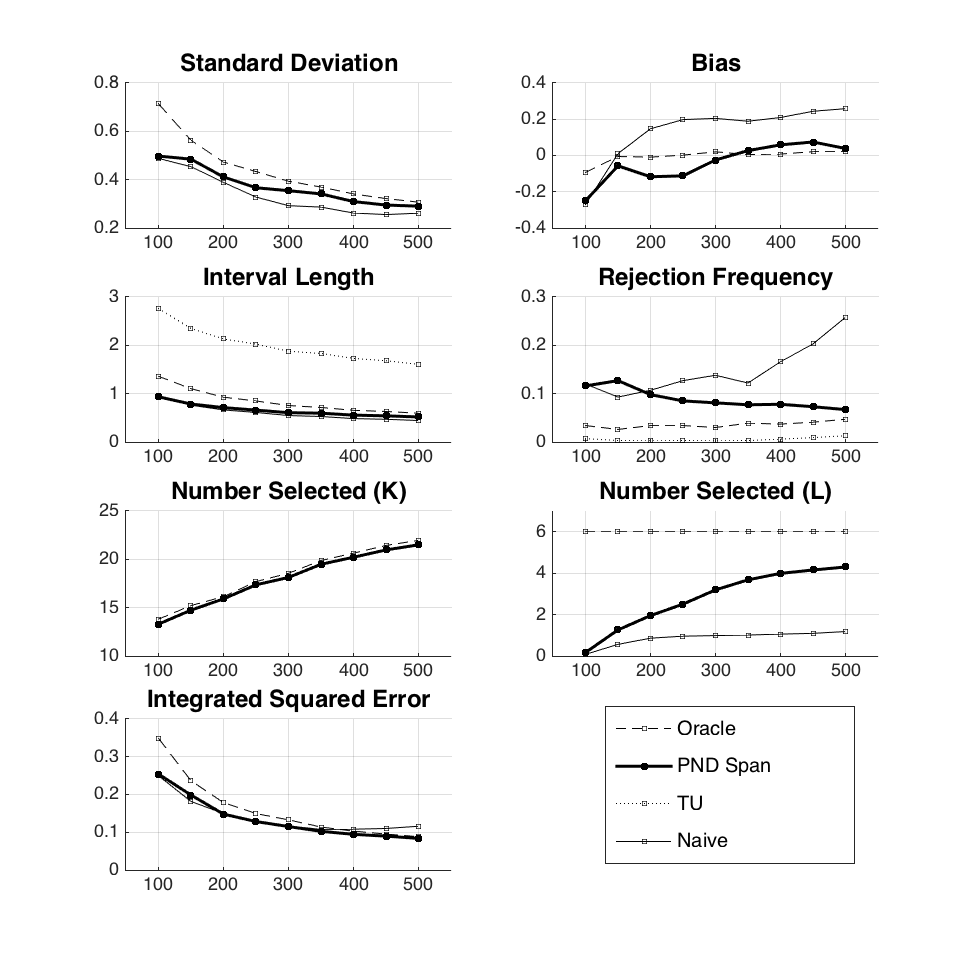}
\scriptsize
\flushleft
Simulation results for the estimation of $g_0$ and $\theta_0$ with $n=100,150,...,500$ with $s_0 = 6$ and $L=2n$.  Estimates presented for estimators: Oracle, Post-Nonparametric Double (PND Span), Naive, and Targeted Undersmoothing (TU).  Plot 1: standard deviation of the estimates for $\theta_0$.   Plot 2: bias of the estimates for $\theta_0$. Plot 3:  confidence interval length for estimates for $\theta_0$.  Plot 4: rejection frequencies under the null for $\theta_0$ for a 5\% level test.   Plot 5:  mean number of series terms $K$ used.  Plot 6:  mean number of series terms from $L$ selected.  Plot 7:  root mean integrated squared error for $g_0$.   Figures are based on 1000 simulation replications.  $n$ is indexed by the horizontal axis.

\


 \end{figurehere}

\

\

 \begin{figurehere}
\caption{Simulation Results}

\


 \includegraphics[scale=.28]{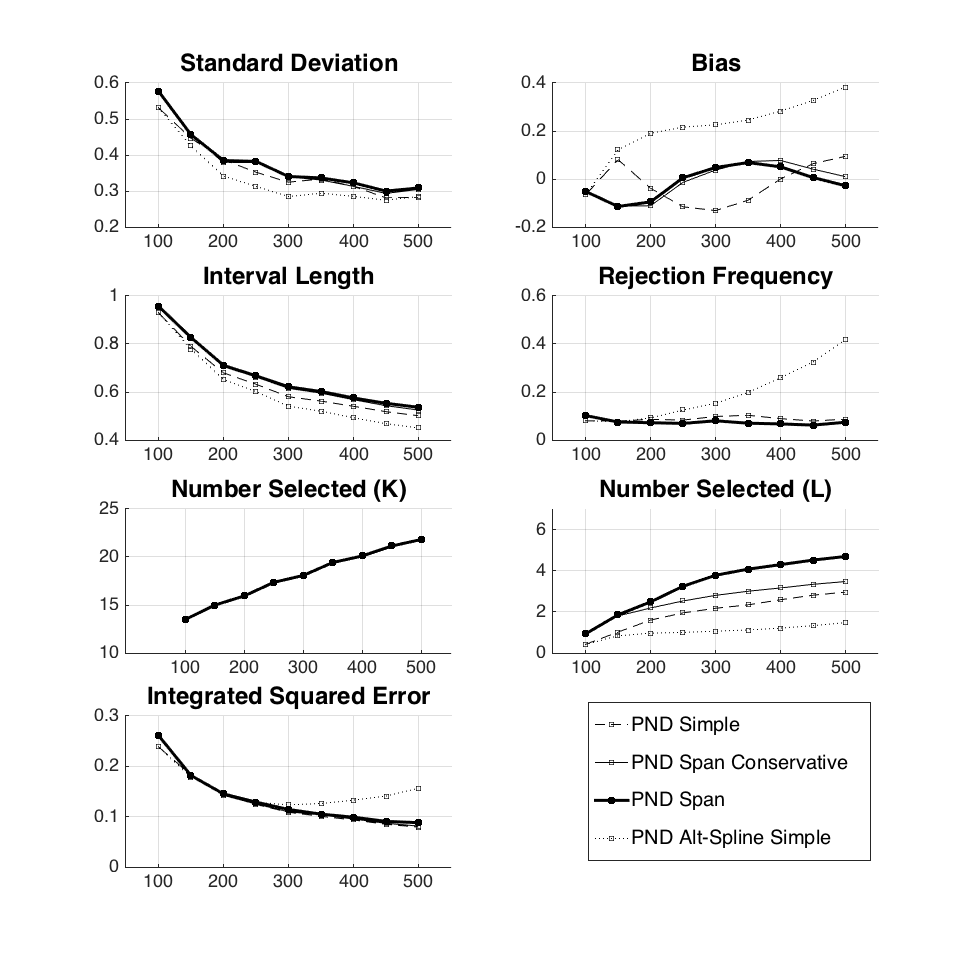}  
\scriptsize
\flushleft
Simulation results for the estimation of $g_0$ and $\theta_0$ with $n=100,150,...,500$ with $s_0 = 6$ and $L=n/2$.  Estimates are presented for four Post-Nonparametric Double Selection (PND) estimators, Simple, Span, Conservative Span, and Alternative Spline Simple as described in the text. Plot 1: standard deviation of the estimates for $\theta_0$.   Plot 2: bias of the estimates for $\theta_0$. Plot 3:  confidence interval length for estimates for $\theta_0$.  Plot 4: rejection frequencies under the null for $\theta_0$ for a 5\% level test.   Plot 5:  mean number of series terms $K$ used.  Plot 6:  mean number of series terms from $L$ selected.  Plot 7:  root mean integrated squared error for $g_0$.   Figures are based on 1000 simulation replications.  $n$ is indexed by the horizontal axis.


\


 \end{figurehere}

\begin{figurehere}
\caption{Simulation Results}

\


\includegraphics[scale=.28]{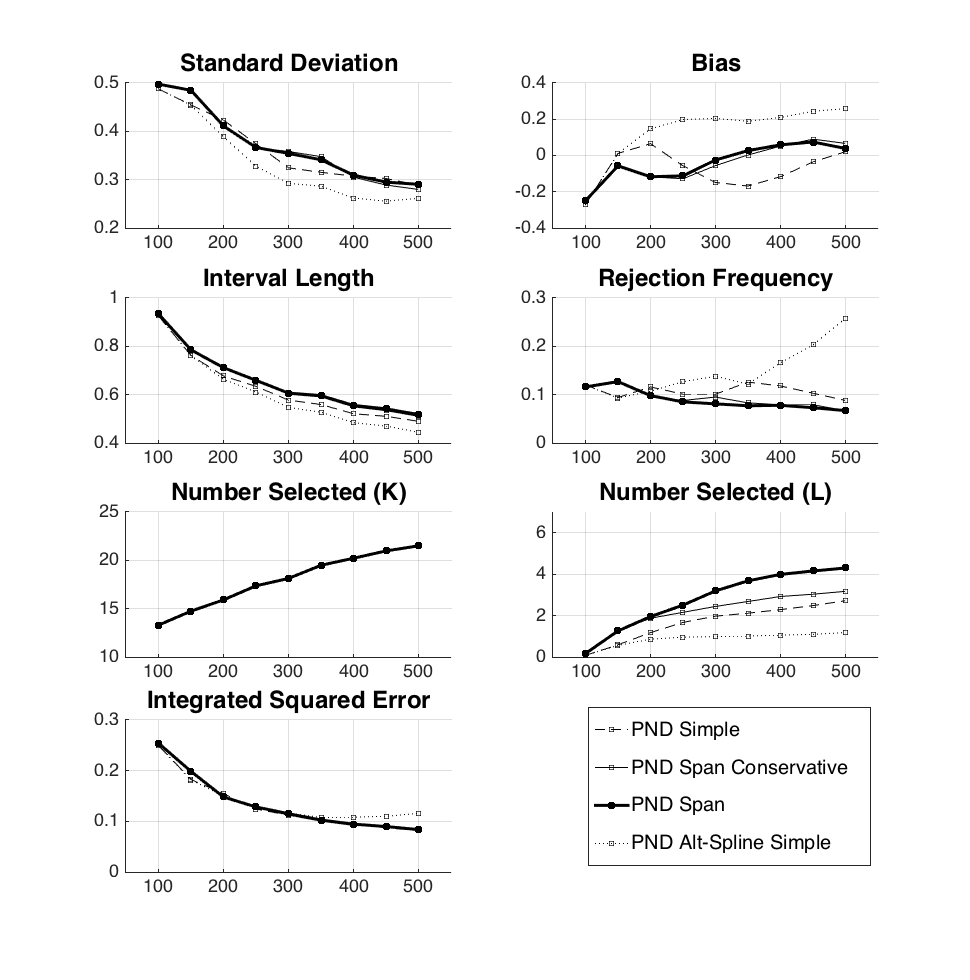}
\scriptsize
\flushleft
Simulation results for the estimation of $g_0$ and $\theta_0$ with $n=100,150,...,500$ with $s_0 = 6$ and $L=2n$.  Estimates are presented for four Post-Nonparametric Double Selection (PND) estimators, Simple, Span, Conservative Span, and Alternative Spline Simple as described in the text. Plot 1: standard deviation of the estimates for $\theta_0$.   Plot 2: bias of the estimates for $\theta_0$. Plot 3:  confidence interval length for estimates for $\theta_0$.  Plot 4: rejection frequencies under the null for $\theta_0$ for a 5\% level test.   Plot 5:  mean number of series terms $K$ used.  Plot 6:  mean number of series terms from $L$ selected.  Plot 7:  root mean integrated squared error for $g_0$.   Figures are based on 1000 simulation replications.  $n$ is indexed by the horizontal axis.

\


 \end{figurehere}

\end{multicols}
\end{landscape}





\doublespacing
\section{College performance and the ACT College Entrance Exam}

This section illustrates Post-Nonparametric Double Selection with an application to learning the relation between college performance and the ACT college entrance exam.  

Understanding factors that predict college performance in students is important for identifying opportunities for improvement in college outcomes.  As an example (see \cite{radford:etal:postsecondary}; see also \cite{BettingerEvansPope}, who also cite this example), 35\% of students who started a postsecondary program in the Fall of 2003 had dropped out and earned no degree six years later.  One key factor identified as contributing to this dropout rate is a mismatch problem: due to noise in the admissions process, capable students are under-placed into less-selective institutions where they are less likely to graduate (see \cite{bowen:chingos:mcpherson}).  This mismatching is costly to the students who drop out, students who could have enrolled in their place, and to the respective colleges.

In deciding which students to admit, many American universities use the ACT college entrance exam. 
The ACT covers four subjects: Mathematics, English, Reading, and Science. Scores from 1--36 on each of these components and a composite score are provided to colleges for their use in admission decisions. The composite score is the rounded average of the scores on the four individual sections. Most colleges report only using the ACT composite score in admissions decisions (\cite{BettingerEvansPope}).   

Correct use of the information in the ACT can improve screening of college students for admissions, and therefore lead potentially to more efficient educational outcomes.   At the same time, relying too much on the ACT exam can also miss important student characteristics that may indicate their potential for success in college.  The question of how to most appropriately use information from the ACT exam is therefore relevant to active admissions committees across the country.  As one recent example highlighting the uncertainty of the value in the ACT, The University of Chicago recently decided to drop the ACT requirement (see \cite{uchi_drops_act}).

In a 2009 paper, Bettinger, Evans, and Pope (\cite{BettingerEvansPope}) studied the predictive capabilities of components of the ACT exam on college performance outcomes.   Their findings are based on linear regression analysis and showed that the Math and English components of the ACT contain more predictive power on various student outcomes of interest including college GPA and dropout rates, conditioning on observables which would have been available to admissions committees.  Conversely, they find that the Science and Reading components add much less explanatory power.  A practical recommendation based on their findings is that college admissions committees can improve screening practices by focusing the ACT Math and English components rather than on the ACT composite.   A subset of their findings is replicated in Table 1 and described below.  

This paper considers the relation between college GPA, ACT scores, and observables that would have been available to admissions committees and allows nonlinear dependence between these variables.  Specifically, this paper estimates the following model for GPA, ACT, and conditioning variables $z$.
\begin{align*}  \text{GPA} & = g_0^{\text{math}}(\text{ACT}_{\text{math}}) + g_0^{\text{eng}}(\text{ACT}_{\text{eng}})   + g_0^{\text{read}}(\text{ACT}_{\text{read}})  + g_0^{\text{sci}}(\text{ACT}_{\text{sci}})  + h_0(z) + \eps
\end{align*}
 Here, GPA is the outcome variable and is defined as college grade point average at the end of the first year.  The regressor of interest ACT = $(\text{ACT}_{\text{math}},\text{ACT}_{\text{eng}},\text{ACT}_{\text{read}},\text{ACT}_{\text{sci}})$, consists of the Math, English, Reading, and Science components of the ACT exam.  Note that the additional additive separability imposed on $g_0$ is also covered under the general framework studied in the previous sections.  Finally, $z$ are control variables that include individual specific data recording (1) which college the student attended, (2) intended major, (3) race, (4) gender, and (5) information about the individual's high-school GPA.  The data used here were compiled by the authors of \cite{BettingerEvansPope} in collaboration with the Ohio Board of Regents (now known as the Ohio Board of Higher Education).
 The particular extract used here contains information on a 1999 cohort of college students who matriculated into an Ohio university.  More details are given below and are also available in \cite{BettingerEvansPope}.

\subsection{Results}

Table 1 replicates regression analyses in \cite{BettingerEvansPope} and presents two motivational specifications which include quadratic nonlinearities (one including and one excluding conditioning variables, Columns 4-5).
Column 3 replicates the results in \cite{BettingerEvansPope}, which is a prediction of college GPA with a linear specification in ACT, and controlling for intended college major, race, gender, highschool GPA, and college campus.  According to the estimates in Column 3, a 1 point increase in component scores for Math, English, Reading, Science are associated with .0291 (std err .01071),  .0388 (std err .0086), -.0057 (std err .0073) and .0405 (std err .0107) increases in GPA.
As was noted by  \cite{BettingerEvansPope}, the science and reading components of the ACT are weak predictors of first year GPA;  in the linear specification, both show small estimated coefficients relative to the Math and English components.  Both are statistically insignificant.  The results in column 5 which allow a quadratic-only nonlinearity suggest that nonlinearities are present.  For example, the quadratic for the ACT Math component is positive and significant with an estimate of .0008 (std err .0002).    Sensitivity of the coefficients to nonlinearity suggests nonparametric analysis may be helpful.

Table 2 presents Post-Nonparametric Double Selection estimates of several weighted average derivatives.  In particular, for a component $\text{ACT}_{\text{c}}$ of the ACT, and for a quartile Q determined by the ACT composite, estimates are presented for  $\theta_0 = \int_{Q} \frac{ \textcolor{white}{|_|} \partial g_0  \textcolor{white}{|_|} }{ \textcolor{white}{|^|}  \partial{\text{ACT}_{\text{c}} \textcolor{white}{|^|} }}dF$.  For simplicity (as in the simulation section), $F$ is taken as the empirical distribution of the ACT composite scores.
The dictionary expansion $q^L(z)$ is defined using the same baseline variables that \cite{BettingerEvansPope} used\footnote{ The present analysis uses a courser grouping 8 of distinct major, still coded in the original data, rather than the grouping of 45 distinct majors used by \cite{BettingerEvansPope}.}.  There are a total of 39 indicator variables characterizing the control variables mentioned above.  These are interacted using three different interaction expansions: the \textit{and} expansion, \textit{or} expansion, and \textit{Hadamard-Walsh} expansion\footnote{ Let $v_{i1},...,v_{ik}$ denote the original set of indicator variables. For each subset $A \subseteq \{1,...,k\}$ the corresponding \textit{and, or, and Hadamard-Walsh} interactions transformations of $(v_{i1},...,v_{ik})$ are given by $\psi_A^{and}(v_{i1},...,v_{ik}) = \prod_{j \in A} v_{ij}, \ \psi_A^{or}(v_{i1},...,v_{ik}) = \textbf{1}\{ \sum_{j \in A} v_{ij} \geq 1\},  \   \psi_A^{H\text{-}W}(v_{i1},...,v_{ik}) = (-1)^{|A \cap \{j:v_{ij}=1\} | }$.  Interactions of order $\leq 3$ correspond to $|A|\leq 3$.  See \cite{TU} for a discussion of the Hadamard-Walsh basis.}, all containing interactions of order $\leq 3$.  Terms with standard deviation $\leq .05$ were then excluded from the analysis. The resulting model is oversaturated.  Practically, such over-parameterization arising from the union of different interaction expansions may help give a better sparse approximation than a single interaction expansion.  The total number of terms is $L=21799$ compared to $n = 21757$.  More implementation and documentation details are available in Appendix A in the Supplement, replication files available from the author, and in \cite{BettingerEvansPope}.

The overall average derivative estimates for the Math and English components, .0256 (std err .0021) and .0159 (std err .0018) are both large and significant at the 95\% level.  Overall average derivative estimates for the Reading and Science areas are  .0027 (std err .0015) and -.0015 (std err .0021), which are smaller and insignificant at  the 95\% level.   

The analysis highlights nonlinearities in $g_0$.  All components of the ACT have statistically significant average derivative estimates in the top quartile (75--100).  The estimated average derivate for the ACT Math component in the highest percentiles (75--100) is .0374 (std err .0031).   This estimate is higher than in all of the lower percentile bins.  Average derivative estimates in  percentiles (50--75), (25--50), (0--25) are .0203 (std err .0020), 0173 (std err .0025), 0276 (std err .0047).    Correcting for 24 different possible comparisons (6 comparisons within each subject component) using a Bonferroni adjustment, the average derivatives is statistically different at the 95\% level in the ACT math component both between percentile ranges (75--100) and (50--75) and percentile ranges (75--100) and (25--50).  

In the percentile range (75--100), English component scores,  .0188 (std err .0032), and Reading component scores,  .0074 (std err .0027) are also significantly and positvely associated with GPA as measured by average derivatives, but with smaller magnitude as compared with Math component scores.  The average derivative estimate for the ACT Science component is negative at -.0067 (std err .0032) and significant at the 95\% level.  The general pattern of large average derivatives in the extremes as seen in Math component estimates persists in the English, Reading, and Science component estimates.  However, correcting for multiple testing, assuming 4 hypothesis tests for the 4 components of the ACT, this component is no longer significantly associated to GPA at the 95\% level.  After correcting for 24 different possible comparisons, there is no statistically significant difference in average derivatives across percentile ranges for English, Reading, and Science components.


\subsection{Limitations}
This section describes limitations in the model assumptions and statistical procedure as they apply to the ACT data.  Addressing these limitations in future work could lead to improved type I error properties and interpretability. 

(1) The above analysis has a potential sample selection bias because only students who matriculate into a four-year college are observed.  \cite{BettingerEvansPope} discuss this problem at length.  They note a large percentage of students in the Ohio public school education system remain in that system throughout their education.  They also perform a validation exercise using 2006 data.  
In addtion, $\Ep[ \text{GPA} | \text{ACT}, z, \text{Matriculated} ]$ is nevertheless of  interest in its own right, because it may help identify at risk students within the pool of students that matriculate. 

(2) An implication of additive separability is that the derivative of $g$ at any value $x$ cannot depend on $z$.   There currently exists no widely implemented post-model selection specification testing framework for the current additively separable setting.   Though, in principle, tests for this assumption could be potentially an interesting avenue for future research.  There are several difficulties in creating such a test.  For example, specification test via looking at quantities of the form $\frac{1}{n} \sum_{i=1}^n \hat \eps_i \mathsf v(x_i,z_i)$, in which $\mathsf v$ is an interaction term is problematic:  the convergence $\max_{i \leq n} | \hat \eps_i - \eps_i| \rightarrow_p 0$ may be at a slower rate than the convergence $\frac{1}{n} \sum_{i=1}^n \eps_i \mathsf v(x_i,z_i) \rightarrow_p \Ep[\eps \mathsf v(x,z)]$.  

(3)  The theory developed in the preceding sections of the paper assumes that the functional $a$ is a fixed, known functional $g \mapsto a(g) \in \mathbb R$.  In particular, the theory does not cover the following two important cases.  The current results do not immediately give asymptotic results in which the actual functional of interest $a$ is itself random and dependent on the data, taking the form $a = a_{\mathscr D_n}$.  This covers the case in which $a$ is the average derivative of $g$ against the empirical distribution of $x$, which may also be a quantity of interest.   The second case is the case in which the functional $a$ itself must be estimated.  This includes the case in which $a(g) = \Ep [ g'(x)]$ and the population distribution of $x$ is unknown and hence the operator $\Ep$ must be ``estimated.''  The reference \cite{newey:stoker:1993} discusses estimating population averages of $g'(x)$ using sample averages of $\hat g'(x_i)$ in low dimensional settings.  Under their regularity conditions, for certain weighting functions $w(x)$, $\Ep[w(x) \hat g(x)]$ and $\frac{1}{n} \sum_{i=1}^n w(x_i) \hat g '(x_i)$ will have different sampling distributions.   However, a key condition in that reference is that $w(x) \times  \mathsf{density}(x)$ vanishes on the boundary of the support of $x$.  This is to allow cancelation of certain boundary terms in the integration.  Otherwise, an average derivative is not $\sqrt{n}$ estimable.  In the ACT example, assuming the density vanishes on the boundary may not be the best approximation (the fraction of students who achieve a score of 36 is small but not vanishingly small compared to the 20,000 students in the sample).   Relatedly, in this example, interest in the average derivative against certain fixed (or independently estimated from a different data source) distribution would be of potential interest to an admissions body.  One example would be the distribution of ACT scores from a previous year's class.  Alternatively, the nation-wide ACT score distribution may be of interest.   In addition, it may also be of interest to calculate the average derivative against the uniform distribution.  This last possibility would be immediately applicable from the theory in the paper.

\subsection{Discussion}

The results in Table 2 add further support to the results found in \cite{BettingerEvansPope}.   The Math and English components of the ACT contain more predictive power than the Scient and Reading components on college GPA.  Expanding on the results in \cite{BettingerEvansPope}, the Post-Nonparametric Double Selection analysis highlights that this general trend holds at all regions of the ACT score distribution (as measured by the average derivatives reported).  The analysis finds that the association between ACT scores and GPA is stronger in the extremes of the ACT score distribution.


\begin{table}[h!]
{\footnotesize Table 1.  Baseline Regression Models
\begin{center}
\begin{tabular}{lcccccc}
\hline 
\hline \\
 GPA & (1)  & (2) & (3) & (4) & (5)   \\
\cline{2-6} \\
ACT Composite       & .0719   \\  
			       & (.0012)  \\ \\
ACT Math                &    & .0335 & .0204 & .0291 & -.0166  \\
       			       &   & (.0016)  & (.0017) & (.0107) & ( .0113)\\ \\
ACT English             &   & .0374 & .0182  & .0388 & .0205 \\
        			        &   & (.0017)  & (.0017) & (.0086) &(.0091)  \\ \\
ACT Reading            &   & .0046 & .0029 &  -.0057 &  -.0174 \\
                                 &   & (.0014)  & (.0013) &  (.0073) &(.0075)\\ \\
ACT Science             &   & -.0041 & -.0023 & .0405 & .0160  \\
                                    &     & (.0020) & (.0020) & ( .0107) & (.0112)  \\ \\
(ACT Math)$^2$       & & &     &  .0001 & .0008  \\
                                  & & &   & ( .0000)  & ( .0002) \\ \\
(ACT English)$^2$   & &    &   &  -.0000 &  -.0001 \\
                                       & &   & &  (.0019)  & (.0002)  &  \\ \\
(ACT Reading)$^2$    & &  &  &  .0002& .0004 \\
                                   &  & & & (.0002)  & (.0002)  &   \\ \\
 (ACT Science)$^2$    & &  & &  -.0010 &.0160 \\
                                    & & &  & (.0002)  & (.0112) \\ \\
$^\sharp$\hspace{.5mm}Linear Controls& & & X & & X  \\
R$^2$ & .1279 & .1407 & .2157 & .1414 & .2165 \\
$n$ &25243 &  25243 & 21757 & 25243 &  21757     \\
\hline
\end{tabular}
\end{center}}
\scriptsize
 \flushleft This table displays estimates from 5 parametric specifications for individual level regressions of College GPA on ACT composite score, ACT subject-area scores, and controls.  Standard errors in parenthesis.

 $^\sharp$ The control variables used here contain indicators for race, gender, high school GPA, college campus, and college major.  There are 39 distinct indicators which enter all above specifications linearly.  The indicators for race, gender, and high school GPA are constructed identically to \cite{BettingerEvansPope}.  The construction of the major indicators differs slightly by grouping majors into 8 categories instead of 45 categories.  
Note: Specifications (1) and (2) replicate results in Table 2 of \cite{BettingerEvansPope} exactly.  Specification (3)  approximately 
replicates the results of \cite{BettingerEvansPope} with small discrepancy due to differing grouping of college majors. Specifications (4) and (5) are not reported in that reference. \ \ \ \ \ \ \ \ \ \ \ \ \ \ \ \ \ \ \ \ \ \ \ \

\

\

\

\

\

\

\

\

\

\

\end{table}

\begin{table}[h!]
{\footnotesize Table 2. Post-Nonparametric Double Selection Analysis \\ Average Derivative Estimates
\begin{center}
\begin{tabular}{lccccc}
\hline 
\hline \\
  & (1)  & (2) & (3) & (4) & (5)   \\
 GPA   & Overall  & Percentiles  & Percentiles   & Percentiles   & Percentiles   \\
           &		   &  0 - 25 & 25-50 & 50-75 & 75 - 100\\
\cline{2-6} \\
ACT Math                &  .0256    & .0276 & .0173 &  .0203  &  .0374  \\
       			       &  (.0021)  & (.0047)  & (.0025)  & (.0020) & (.0031)\\ \\
ACT English              &  .0159    & .0156 & .0137 &  .0156  &  .0188  \\
        			         &  (.0018)  & (.0032)  & (.0028)  & (.0023) & (.0032)\\ \\
ACT Reading             &  .0027    & -.0023 & .0013 &  .0044  &  .0074  \\
                                  &  (.0015)  & (.0031)  & (.0021)  & (.0019) & (.0027)\\ \\
ACT Science              &  -.0015    & .0046 & -.0019 &  -.0021  &  -.0067  \\
                                     &  (.0021)  & (.0038)  & (.0031)  & (.0030) & (.0032)\\ \\
                                    \\
$n:$ 21757      \ \ 
$K$: 24 \ \
$L$: 21799 \\
Selected: 614 \ \ R$^2$: 0.2428 \\
\hline
\end{tabular}
\end{center}}
\scriptsize
\flushleft
Post-Nonparametric Double Selection estimates for average derivatives of GPA with respect to ACT component scores and a profile of conditioning variables generated from information on race, gender, campus, high school GPA and college major.  An overall average derivative is estimated for each ACT component in panel (1). Quartile (percentile range) memberships are defined according to ACT composite scores.  Average derivatives are computed within each quartile in panels (2) - (5).  
\end{table}

\section{Conclusion}

This paper considers the problem of selecting a conditioning set in the context of nonparametric regression.    Convergence rates and inference results are provided for series estimators of a primary component of interest in additively separable models with high-dimensional conditioning information.  The finite sample performance of several Post-Nonparametric Double Selection estimators are evaluated in a simulation study.  Overall, the proposed Span option has good estimation and inferential properties in the data generating processes considered.

\vspace{-1mm}

\pagebreak 
\singlespace

\bibliographystyle{plain} 
{
\bibliography{dkbib1}
}

\pagebreak



\appendix

\noindent \textbf{SUPPLEMENT TO ``Inference in Additively Separable Models with a High-Dimensional Set of Conditioning Variables''}

\

\onehalfspacing

\section{Implementation Details}

\subsection{Lasso Implementation Details}

\

\subsubsection{Lasso implementation given penalty $\lambda$.}
In every case, penalty loadings $\ell_j$ are chosen as described in \cite{BellChenChernHans:nonGauss} with one small modification.  The procedure suggested in \cite{BellChenChernHans:nonGauss} requires an initial penalty loadings which are constructed using initial estimates of regression residuals.  Their suggestion is to use $\hat \varepsilon_i^{\text{initial}} = y_i$ followed by an iterative procedure.  Here, instead, $\hat \varepsilon_i^{\text{initial}}$ are taken as the linear regression residuals after regressing the outcome $v$ on the 5 most marginally correlated $q_{jL}$, ie, the 5 which have the highest $| \hat {\text{corr}}(v, q_{jL}(z)|$.   Such modification was also used in \cite{TU}.    

\subsubsection{Penalty level choice for single outcome.}  In every case when a single outcome variable is considered in isolation (this includes the reduced form selection step and the selection step corresponding to $\PhiK_1$), Lasso is implemented with penalty $\lambda$ as described in \cite{BellChenChernHans:nonGauss}.  For ease of reference, note that \cite{BellChenChernHans:nonGauss} suggest $\lambda$ given by $2c_{\text{Lasso}} F^{-1}_{\N(0,1)}(1-\alpha_{\text{Lasso}} /L)$ where $c_{\text{Lasso}}>1, \alpha_{\text{Lasso}} \rightarrow 0$ are tuning parameters.  In every instance in this paper, $c_{\text{Lasso}}=1.01$ and $\alpha_{\text{Lasso}} = .05$ are used.

\subsubsection{Penalty level choice for ${\Phi_{K,\text{\em Simple}}}$.} 

In this case, $K$ Lasso regressions are run simultaneously.   In this case, for all $\varphi \in \Phi_K$, $\lambda$ is given by $2c_{\text{Lasso}} F^{-1}_{\N(0,1)}(1-\alpha_{\text{Lasso}} /L)$ where $c_{\text{Lasso}}=1.01$ and $\alpha_{\text{Lasso}} = .05/K$ are used.

\subsubsection{Penalty level choice and implementation for ${\Phi_{K,\text{\em Span}}}$.} 
When the Span option is used, ${\Phi_{K,\text{\em Span}}}$ is decomposed 
${\Phi_{K,\text{ Span}}} = \PhiK_1 \cup \PhiK_2 \cup \PhiK_3$.  Each component has a corresponding penalty level applied to all $\varphi$ within that component.  On the first component, $\lambda_{ \PhiK_1 } = 2c_{\text{Lasso}} F^{-1}_{\N(0,1)}(1-\alpha_{\text{Lasso}} /L)$ where $c_{\text{Lasso}}=1.01$ and $\alpha_{\text{Lasso}} = .05$.  On the second component, $\lambda_{ \PhiK_2 } = 2c_{\text{Lasso}} F^{-1}_{\N(0,1)}(1-\alpha_{\text{Lasso}} /L)$ where $c_{\text{Lasso}}=1.01$ and $\alpha_{\text{Lasso}} = .05/K$.  On the third component, $\lambda_{ \PhiK_3 } = 2c_{\text{Lasso}} F^{-1}_{\N(0,1)}(1-\alpha_{\text{Lasso}} /L)$ where $c_{\text{Lasso}}=1.01$ and $\alpha_{\text{Lasso}} = .05/K$.

The following procedure is used for approximating $I_{\PhiK}$ in the case that a component of $\Phi_K$ contains a continuum of test functions.  For each $j \leq L$, a Lasso regression $\check \varphi_j \in \Phi_{K3}$ which is more likely to select $q_{jL(z)}$ than other $\varphi \in \Phi_K$.  Specifically, for each $j$, $\check \varphi_j$ is set to the linear combination of $p_{1K},...,p_{KK}$ with highest marginal correlation to $q_{jL}$.  Then the approximation to the first stage model selection step proceeds by using $\check I_{\PhiK_3} =  \bigcup_{j\leq L} I_{ \check \varphi_{j}(x)}$ in place of $I_{\PhiK_3}$.

\subsubsection{Penalty level choice for ${\Phi_{K,\text{\em Span-Conservative}}}$.} 

When the Conservative Span option is used, ${\Phi_{K,\text{ Span-Conservative}}}$ is decomposed 
${\Phi_{K,\text{ Span-Conservative}}} = \PhiK_1 \cup \PhiK_2 \cup \PhiK_3$.  Each component again has a corresponding penalty level applied to all $\varphi$ within that component.  On the first component, $\lambda_{ \PhiK_1 } = 2c_{\text{Lasso}} F^{-1}_{\N(0,1)}(1-\alpha_{\text{Lasso}} /L)$ where $c_{\text{Lasso}}=1.01$ and $\alpha_{\text{Lasso}} = .05$.  On the second component, $\lambda_{ \PhiK_2 } = 2c_{\text{Lasso}} F^{-1}_{\N(0,1)}(1-\alpha_{\text{Lasso}} /L)$ where $c_{\text{Lasso}}=1.01$ and $\alpha_{\text{Lasso}} = .05/K$.  On the third component, $\lambda_{ \PhiK_3 } = 2c_{\text{Lasso}} F^{-1}_{\N(0,1)}(1-\alpha_{\text{Lasso}} /L)$ where $c_{\text{Lasso}}=1.01K^{1/2}$ and $\alpha_{\text{Lasso}} = .05$.  

In order to approximate the variables selected on the continuum of Lasso estimates indexed by $\PhiK_3$, the identical procedure with the Span option above is used.  Note that the only difference between the Conservative Span option and the Span option is in $\lambda_{\PhiK_3}$.

\

\subsection{$p^K$ Implementation Details}

In every simulation, $p^K$ is constructed using a cubic B-spline expansion. For fixed $K$, the approximating dictionary is chosen according to the following procedure.   
Knots points $t_1,...,t_{K-3}$ are chosen according to the following rule.    
Set $$t_{\max} = \text{quantile}_{0.95}(|x_1|,...,|x_n|) \ \text{and } t_{\min}= - t_{\max}.$$    Let $\Delta_k = t_k - t_{k-1}$.  For constants $c_1,c_2 \geq 0$ set $$ \Delta_k = c_1 + c_2| (K-2)/2 - k| $$ for $k=2,...,K-3$.

The constants $c_1,c_2$ serve to insert more knot points where the density of $x$ is higher.   The choices for $c_1,c_2$ are determined uniquely by the condition that  $c_1 = 2c_2$ and that the endpoints satisfy $t_1 = t_{\min}$ and $t_{K-3} = t_{\max}$.
Next, the B-spline formulation used here is given by the recursive formulation.   Set $$B_{k,0}(x) = \textbf{1}_{t_k \leq x < t_{k+1}}.$$
Set $B_{k,0} = 0$ for $k$ outside of $1,...,K-3$.  In addition, for spline order $o>0$,
$$ B_{k,o}(x) = \frac{x - t_k}{t_{k + o} - t_k}B_{k,o-1} + \frac{ t_{k+o+1} - x}{t_{k + o+1} - t_{k+1}}B_{k+1,o-1} .$$
 Set $(p_{1,K}(x),...,p_{K-3,K}(x)) = (B_{1,3}(x) , ... B_{K-3,3}(x))$.  The dictionary is completed by adding the additional terms $
p_{K-2, K}(x) = x, \  p_{ K-1, K}(x) = x^2, \  p_{ K, K}(x) = x^3 $.

$\hat K$ is chosen according to the following procedure.  First, an initial set of terms $ q^{\text{initial}}(z) \subseteq q^L(z)$ is selected.  In each case, $q^{\text{initial}}(z)$ contains the terms $I_{RF}$.  That is, the terms selected in a Lasso regression $y$ on $q^L(z)$.  Next, an initial value $\hat K_0 \leq 2\lfloor n^{1/3} \rfloor  $ is chosen to minimize BIC using $(p^K(x),  q^{\text{initial}}(z))$.  In the simulation study, $\hat K_0$ is constrained to be $\geq 5$.  Finally, in order to ensure undersmoothing, $\hat K$ is set to $\hat K = \lfloor ( \log_{10}(n)) \hat K_0 \rfloor $ in the simulation studies and $\hat K = \hat K_0+1$ in the empirical example.  

In the empirical example, separate components of $g_0$, given by
$g_0^{\text{math}}(\text{ACT}_{\text{math}}),   g_0^{\text{eng}}(\text{ACT}_{\text{eng}}), g_0^{\text{read}}(\text{ACT}_{\text{read}}  g_0^{\text{sci}}(\text{ACT}_{\text{sci}}))$ are each approximated with separate dictionaries $p_{\text{math}}^{K_{\text{math}}}$,  $p_{\text{eng}}^{K_{\text{eng}}}$,  $p_{\text{read}}^{K_{\text{read}}}$,  $p_{\text{sci}}^{K_{\text{sci}}}$.  The restriction $ K_{\text{math}} =  K_{\text{eng}} =  K_{\text{read}} =  K_{\text{sci}}$ is enforced.  Set $$ K =  K_{\text{math}} +  K_{\text{eng}} +  K_{\text{read}} + K_{\text{sci}} .$$Each of the above four dictionaries is a B-spline basis defined exactly as in the simulations.  $\hat K$ is chosen to minimize BIC in the same way as in simulations.  Then $\hat K$ is set to $$\hat K = 4 + \hat K_0.$$
The choice $\hat K = 4 + \hat K_0$ in the empirical example, instead of $\hat K = \lfloor ( \log_{10}(n)) \rfloor$, is made to avoid $\hat K > 4 \times 36 $ which corresponds to the size of the support of the data.  

\subsection{Targeted Undersmoothing Implementation Details}

The following procedure is used to estimate the Targeted Undersmoothing (TU; specifically TU(1); see \cite{TU}) confidence intervals for $\theta_0$.  For each $I \subseteq \{1,...,p\}$ let $\hat {\text{CI}}_{K,I}(\theta_0)$ be the corresponding confidence interval for $\theta_0$ using $K$ terms and the components of $q^L$ corresponding to $I$. 
Then the full TU confidence interval is defined by the convex hull of $\cup_{j \leq p} \hat {\text{CI}}_{\hat K, I_{\RF} \cup \{j\} }(\theta_0)$.  In this implementation, a truncated TU confidence interval is calculated instead: $\cup_{j \leq 2s_0} \hat {\text{CI}}_{\hat K, I_{\RF} \cup \{j\} }(\theta_0)$.  This is done because the simulation run time reduces to the order of a day (from the order of a month), and therefore helps facilitate easier replicability.  Changing the code to calculate the full TU confidence intervals is trivial.  This also highlights that computing speed is another advantage of the Post-Nonparametric Double procedure relative to TU in certain settings.  In terms of approximation error, the full TU estimator was implemented for the case $n=100$, $p=50$ for 1000 replications.  The full TU confidence intervals as well as the truncated TU confidence intervals each made 9 false rejections.  In addition, the average interval length for the full TU intervals was 1.740 while the average interval length for the truncated TU intervals was 1.722.  Therefore, the truncated and full TU confidence intervals show very similar performance in this instance.

\section{Proofs}

\subsection{Preliminary Setup and Additional Notation}

Throughout the course of the proof, as much reference as possible is made to results in \cite{newey:series},\cite{BCH-PLM}.  This is done in order to maximize clarity and to present a better picture of the overall argument.  In many cases, appealing directly to arguments in \cite{newey:series} is possible because many of the bounds required for deriving asymptotic normality for series estimators depend only on properties of $\hat g$, $g_0$, $p^K$ and $D$.  Less direct appeal to bounds in the original Post-Double Selection argument is possible, since those arguments do not track $K$, and do not have notions of quantities stemming from $\PhiK$ like $\alpha_\rho, \alpha_{\Phi}$.  However, the main idea of decomposing $p^K$ into components in the span of, and orthogonal to $q^L$, remains as a theme throughout the proofs. 

 For any function $\varphi$, let $\varphi(X)$ denote the vector $[\varphi(x_1),\varphi(x_2),...,\varphi(x_n)]'$.  Similarly, let $\phi_{q^L}\varphi(Z) = [\pi_{q^L} \varphi(z_1),\pi_{q^L} \varphi(z_2),...,\pi_{q^L} \varphi(z_n)]'$.  In addition, define the following quantities.

\begin{itemize}
\item[1.] Let $m$ be the $n \times K$ matrix $m = \pi_{q^L}p^K(Z) = [\pi_{q^L}p_{1K}(Z),...,\pi_{q^L}p_{KK}(Z)]$
\item[2.] Let $W = P -m$ 
\item[3.] Let $\hat \Omega = n^{-1}P' \mathscr M P$  
\item[4.] Let $\Omega = n^{-1}\Ep[W'W]$  
\item[5.] Let $\bar \Omega = n^{-1} W'W$
\item[6.] Let $m$ be partitioned $m = [m_1,...,m_K]$
\item[7.] Let $W$ be partitioned $m = [W_1,...,W_K]$
\item[8.] For any $\varphi \in \PhiK$, let $R_\varphi = Q ( \beta_{\varphi,L} - \beta_{\varphi,L,s_0} )$  
\item[9.] Let $R_y=Q ( \beta_{y,L} - \beta_{y,L,s_0} )$
\item[10.] For any $\varphi$, let $U_\varphi = \varphi(X) - Q \beta_{\varphi,L}$ 
\item[11.] Let $U_y = Y - Q \beta_{y,L}$ 
\item[12.] Let $F = V^{-1/2}$
\item[13.] Let $\varphi_a(x)$ be the function such that $\pi_{q^L}\varphi_a(Z) =  FA'm$  
\item[14.] Let $m_a = FA'm$ 
 \item[15.] Let $W_a = \varphi_a(X) - m_a$.
 \item[16.] For $g \notin \PhiK$, let $R_g = \pi_{q^Lg}(Z) - \eta_1Q ( \beta_{\varphi_1,L} - \beta_{\varphi_1,L,s_0} ) - ... - \eta_{k_g}Q ( \beta_{\varphi_{k_g},L} - \beta_{\varphi_{k_g},L,s_0} )$ for some $(\varphi_1,...\varphi_{k_g}),(\eta_1,...,\eta_{k_g})$ within a fixed constant factor of achieving the infinum in the density assumption (Assumption 8.)
 \item[17.] Let $R_m = [R_{m_1},...,R_{m_K}]$.
\end{itemize}

Assume without loss of generality that $B_K = \text{Id}_K$, the identity matrix of order $K$.  The reason this is without loss of generality is that dictionary $p^K$ is used only in the post-selection estimation, while $\PhiK$ is used for first stage model selection. In addition, assume without loss of generality that $\Omega = \text{Id}_K$.

Throughout the exposition, there is a common naming convention for various regression coefficients.  Quantities of the form $\hat \beta_{v, I}$ always denotes the sample regression coefficients from regressing the variable $v$ on the components specified by $I$.  This implies that the quantities $\hat \beta_{\varphi, I_{\varphi,L}} = \hat \beta_{\varphi,L,\text{Post-Lasso}}$ are equivalent, since the specified components being regressed on are the same. In addition, $\hat \beta_{\varphi, I_{\PhiK + \RF}} = \hat \beta_{\varphi,\tilde q} = \hat \beta_{\varphi(X), I_{\Phi_K + \RF}}$ are equivalent.  
Next, quantities of the form $\beta_{v,L} $ and $ \beta_{v,L,s_0}$ without a hat accent are population quantities and are defined in the text above.

\subsection{Preliminary Lemmas}

\begin{lemma} Under the assumptions of Theorem 1,

\begin{itemize}
\item[1.] $J_1:=\max_{k \leq K } n^{-1/2}\| Q ' W_k  \|_{\infty}  = O_p(\Jone)$\vspace{1.5mm}
\item[2.] $J_2:=n^{-1/2}  \| Q ' \mathscr E \|_{\infty} = O_p(\Jtwo)$ \vspace{1.5mm}
\item[3.] $J_{3}:=n^{-1/2} \|R_m'\mathscr E\|_2 = O_p(\Jthree)$\vspace{1.5mm}
\item[4.] $J_4:=n^{-1/2} \| R_{h_0}'W\|_{2} = O_p(\Jfour)$\vspace{1.5mm}
\item[5.] $J_5:=\max_{ k \leq K}  n^{-1/2} \| \MI m_k\|_{2} $ $$= O_p(\Jfive)$$
\item[6.] $J_6:=n^{-1/2} \| \MI h_0(Z) \|_2= O_p( \Jsix)$\vspace{1.5mm}
\item[7.]  $J_7:=\max_{k\leq K} \|\hat \beta_{m_k,I_{\PhiK + \text{RF}} } - \beta_{p_{kK}, L, s_0} \|_1 $ $$=O_p( \Jseven)$$
\item[8.] $J_8:=\| \hat \beta_{h_0, I_{\PhiK + \text{RF}}} - \beta_{h_0,L,s_0} \|_1 $ $$= O_p( \Jeight)$$
\item[9.] $J_9:=\max_{k \leq K} \| \hat \beta_{W_k,  I_{\PhiK + \text{RF}}  }  \|_1 = O_p( \Jnine )$\vspace{1.5mm}
\item[10.] $J_{10}:= \| \hat \beta_{\mathscr E,  I_{\PhiK + \text{RF}}  }  \|_1 =  O_p(\Jten)$\vspace{1.5mm}
\item[11.] $J_{11} := n^{-1/2}\| Q ' W_a  \|_{\infty}  = O_p(\Jone)$\vspace{1.5mm}
\item[12.] $J_{12}:=n^{-1/2} \|R_{m_a}'\mathscr E\|_2 = O_p(\Jthree)$\vspace{1.5mm}
\item[13.] $J_{13}:= n^{-1/2} \| \MI m_a\|_{2} = O_p(\Jfive)$
\item[14.] $J_{14}:= \|\hat \beta_{m_a,I_{\PhiK + \text{RF}} } - \beta_{\varphi_a, L, s_0} \|_1 $ $$= O_p( \Jseven)$$
\item[15.] $J_{15} :=\| \hat \beta_{W_a,  I_{\PhiK + \text{RF}}  }  \|_1 = O_p( \Jnine )$\vspace{1.5mm}
\item[16.] $J_{16} :=n^{-1}\| R_m'W \|_\F = O_p( \Jsixteen ).$\vspace{1.5mm}

\end{itemize}
\end{lemma}

\begin{proof}  

\

\

\noindent \textbf{Statement 1.}  By Lemma 5 of \cite{BCH-PLM}, two conditions which together are sufficient for $\max_{k \leq K j\leq L } \frac{| Q_{j} ' W_k |}{  \sqrt{\sum_{i=1}^n q_{jL}(z_i)^2 W_{ki}^2} } = O_p( (\log KL)^{1/2} )$
are that $\max_{k \leq K, j \leq L} \frac{ \Ep[|q_{jL}(z)|^3|W_{ik}|^3]^{1/3} }{\Ep[q_{jL}(z)^2W_{ik}^2]^{1/2}   }=O(\zeta_0(K))$ and the rate condition $\log KL = o(\zeta_0(K)^{-1} n^{1/3})$.   Note that $\Ep[q_{jL}(z)^2W_{ik}^2]^{1/2}  $ is bounded away from zero by assumption. In addition, by H\"older's inequality, $\Ep[|q_{jL}(z)|^3|W_{ik}|^3] \leq \Ep[|q_{jL}(z)|^3 ] \zeta_0(K)^3$. This implies that the first condition holds.  The second condition is given in the assumptions.

\

\noindent \textbf{Statement 2.}   Follows similarly as Statement 1.

\

\noindent \textbf{Statement 3.}  This statement follows directly from the fact that $\Ep[\varepsilon | x,z]=0$, $\Ep[\varepsilon^2 | x,z ]$ bounded, along with $\text{dim}(R_m'\mathscr E) = K$ and $\|R_{m_k}\|_\infty = O(K^{\alpha_{\rho}}L^{-\alpha_{\mathscr Z}})$ by the density assumption, allowing the use of the $K$-dimensional Chebyshev Inequality.

\

\noindent \textbf{Statement 4.}    $\| R_{h_0}'W \|_2 = \| \sum_{i} R_{h_0,i}W_i \|_2 \leq O(L^{-\alpha_{\mathscr Z}}) \zeta_0(K)$ by the facts that $\|R_{h_0} \|_{\infty}= O(L^{-\alpha_{\mathscr Z}})$ and $\| W_i \|_2 \leq \zeta_0(K)$.

\

\noindent \textbf{Statement 5.} 

\

First note that the following two hold.  
\begin{itemize}
\item[1.] For any $\varphi \in \PhiK$, $ \MI \pi_{q^L}\varphi(Z)  =   \MI R_{\varphi}  + \MI (Q\beta_{\varphi,L,s_0 } - Q \hat \beta_{\varphi,I_{\varphi,L}} ).$  
\item[2.] For any $g \in \text{LinSpan}(p^K)$, and any corresponding expansion $g = \eta_1\varphi_1 + ... + \eta_{k_g} \varphi_{k_g} + r_g$ with $\eta_1,...\eta_{k_g} \in \mathbb R, \varphi_1,...\varphi_{k_g} \in \PhiK$, \ $$\|  \MI \pi_{q^L}g(Z) \|_2   \leq  \| \eta \|_1  \max_{\varphi \in \{ \varphi_1,...,\varphi_{k_g} \}} ({ \| Q \beta_{\varphi,L,s_0}} - Q \hat \beta_{\varphi,I_{\varphi,L}} \|_2 + \| R_{\varphi} \|_2 ) +\| r_g(Z) \|_2. $$
\end{itemize}

To show the first of the above two statements, for each $\varphi \in \PhiK$, note that 
\begin{align*}
  \MI \pi_{q^L}\varphi(Z)  & = \MI  \MI \pi_{q^L}\varphi(Z)   \\ & =  \MI(\pi_{q^L}\varphi(Z) -  \PI \pi_{q^L}\varphi(Z) ) \\
  & =   \MI  ( Q\beta_{\varphi,L}-  \PI (\varphi(X) - U_\varphi) ) \\
  &   =  \MI  (    Q\beta_{\varphi,L}  - Q \hat \beta_{\varphi,I_{\PhiK + \RF}} +  \PI U_{\varphi}    )\\
  &   =    \MI  R_{\varphi}  + \MI (Q\beta_{\varphi,L,s_0 } - Q \hat \beta_{\varphi,I_{\PhiK + \RF}} )+ \MI  \PI U_{\varphi}    \\
& = \MI R_{\varphi} + \MI (Q\beta_{\varphi,L,s_0 } - Q \hat \beta_{\varphi,I_{\PhiK + \RF}} ) \\
    &   =   \MI R_{\varphi}  + \MI (Q\beta_{\varphi,L,s_0 } - Q \hat \beta_{\varphi,I_{\varphi,L}} ) + \underbrace{\MI(Q \hat \beta_{\varphi,I_{\varphi,L}} - Q \hat \beta_{\varphi,I_{\PhiK + \RF}}  )  }   \\
    \end{align*}
\begin{align*}
 &\hspace{5.5cm} =\MI( \PI_{I_{\varphi,L}} \varphi(X) - \PI \varphi(X)) \\
 &\hspace{5.5cm}=\MI \PI(\PI_{I_{\varphi,L}} \varphi(X) - \varphi(X) )  \\
 &\hspace{5.5cm}= 0 \\
 \Rightarrow   \MI \pi_{q^L}\varphi(Z)   & =   \MI R_{\varphi}  + \MI (Q\beta_{\varphi,L,s_0 } - Q \hat \beta_{\varphi,I_{\varphi,L}} ). 
 \end{align*}

This establishes the first claim.
Now turn to the second claim.  Note that using the density assumption, there are $\varphi_1,...,\varphi_{k_{g}}$ and a vector $\eta = (\eta_1,...,\eta_{k_g})$ such that 
$g = \eta_1\varphi_1 + ... + \eta_{k_g}\varphi_{k_g} + r_g$
for some remainder $r_g$, sufficiently small.  Then

$$\| \MI  \pi_{q^L} g(Z)  \|_2 = \| \eta_1  \MI  \pi_{q^L} \varphi_1(X) + ... +\eta_{k_g}  \MI  \pi_{q^L} \varphi_{k_g}(Z)  + \MI  \pi_{q^L} r_g(Z) \|_2$$

Next, looking at each $\varphi$ in the above expansion (ie each $\varphi \in \{ \varphi_1,...,\varphi_{k_g} \}$) and combining the above expression gives 
 
 $$\| \MI  \pi_{q^L} g(Z)  \|_2 = \| \eta_1 \MI R_{\varphi_1}  + \eta_1 \MI(Q\beta_{\varphi_1,L,s_0 } - Q \hat \beta_{\varphi_1,I_{\varphi_1}} ) + ... $$ $$... +\eta_{k_g} \MI R_{\varphi_{k_g}}  + \eta_{k_g} \MI( Q\beta_{\varphi_{k_g},L,s_0 } - Q \hat \beta_{\varphi_{k_g},I_{\varphi_{k_g}}} )+  \MI  \pi_{q^L} r_g(Z) \|_2.$$
 
 \
 
Applying H\"older's inequality and the fact that $\MI$ is a projection (and hence non-expansive) gives the bound

\

$$ \leq  \| \eta \|_1  \max_{\varphi \in \{ \varphi_1,...,\varphi_{k_g} \}}( { \| Q \beta_{\varphi,L,s_0}} - Q \hat \beta_{\varphi,I_{\varphi,L}} \|_2 +\| R_\varphi \|_2)  +\| r_g(Z) \|_2.$$

These can then be applied directly to $n^{-1/2} \| \MI m_k \|_2$. The corresponding $\eta$ and $R_{m_k}$ satisfy $L^{-\alpha_{\mathscr Z}} \| \eta \|_1 \leq O(K^{-\alpha_{\rho}})$ and $\| R_{m_k} \|_2 \leq n^{1/2} O(K^{-\alpha_{\rho}} )$.  Then we have the bound

$$  \| \MI  \pi_{q^L} g(Z)  \|_2 =O_p( K^{\alpha_{\rho}} K^{\alpha_{\Phi}/2}s_0^{1/2} \log(L)^{1/2} + n^{1/2} K^{- \alpha_{\rho }} ).$$

\

Under Assumption 10, note that for each $m_k$, taking $\eta = 1$ and $R_{m_{k}} = 0$ are feasible by assumption.  The result follows.  

\

\noindent \textbf{Statement 6.}
\begin{align*}
n^{-1/2} \| \MI h_0(Z) \|_2 & = n^{-1/2}\| \MI (Q\beta_{h_0,L,s_0} + R_{h_0} )   \|_2)\\
& \leq n^{-1/2} (\| \MI Q\beta_{h_0,L,s_0}   \|_2 + \| \MI R_{h_0} \|_2 )\\
& \leq n^{-1/2} (\| \MI Q(\beta_{g_0,L,s_0} +\beta_{h_0,L,s_0} - Q\beta_{g_0,L,s_0}  )  \|_2 + \| \MI R_{h_0} )\|_2\\
& =n^{-1/2} (\| \MI (Q\beta_{y,L,s_0} - Q\beta_{g_0,L,s_0}  )  \|_2 + \| \MI R_{h_0} \|_2)\\
& \leq n^{-1/2} (\| \MI Q\beta_{y,L,s_0} \|_2 + \| \MI Q\beta_{g_0,L,s_0}    \|_2 + \| \MI R_{h_0} \|_2)\\
& = n^{-1/2} (\| \MI \pi_{q^L}y(Z) \|_2 + \| \MI \pi_{q^L}g_0(Z)    \|_2 + \| \MI R_{h_0} \|_2)\\
\end{align*}

The first two terms above, $n^{-1/2} (\| \MI Q\pi_{q^L}y(Z) \|_2 + n^{-1/2} \| \MI \pi_{q^L}g_0(Z)    \|_2 )$, are $O_p( K^{\alpha_{\rho}} K^{\alpha_{\Phi}/2}n^{-1/2}s_0^{1/2} \log(L)^{1/2} + L^{- \alpha_{\mathscr Z}}K^{\alpha_{\rho}} )$ by the same reasoning as Statement 5.  In addition $n^{-1/2} \| \MI R_{h_0} \|_2 \leq n^{-1/2} \| R_{h_0} \|_2= O(L^{-\alpha_{\mathscr Z}}) $ by assumption.  This gives 

$$n^{-1/2} \| \MI h_0(Z) \|_2 = O_p( K^{\alpha_{\rho}} K^{\alpha_{\Phi}/2}n^{-1/2} s_0^{1/2} \log(L)^{1/2} +  L^{- \alpha_{\mathscr Z}}K^{\alpha_{\rho}}).$$

\

\noindent \textbf{Statement 7.}
\begin{align*}
& \| \hat \beta_{m_k, I_{\PhiK + \RF}} - \beta_{p_{kK},L,s_0} \|_1 \\
& \leq | I_{\PhiK + \RF} |^{1/2} \| \hat \beta_{m_k, I_{\PhiK + \RF}} - \beta_{p_{kK},L,s_0} \|_2  \\
& = | I_{\PhiK + \RF} |^{1/2} \( (\hat \beta_{m_k, I_{\PhiK + \RF}} - \beta_{p_{kK},L,s_0}) '(\hat \beta_{m_k, I_{\PhiK + \RF}} - \beta_{p_{kK},L,s_0}) \)^{1/2}  \\
& \leq  | I_{\PhiK + \RF} |^{1/2} O_p(1)\( (\hat \beta_{m_k, I_{\PhiK + \RF}} - \beta_{p_{kK},L,s_0}) ' (Q_{I_{\PhiK + \RF}} ' Q_{I_{\PhiK + \RF}} / n)(\hat \beta_{g, I_{\PhiK + \RF}} - \beta_{p_{kK},L,s_0}) \)^{1/2}  \\
& = | I_{\PhiK + \RF} |^{1/2} O_p(1) n^{-1/2} \| \PI m_k - Q \beta_{p_{kK}, L, s_0} \|_2 \\
& = | I_{\PhiK + \RF} |^{1/2} O_p(1)n^{-1/2} \| m_k-\MI m_k - Q \beta_{p_{kK}, L, s_0} \|_2 \\
& = | I_{\PhiK + \RF} |^{1/2} O_p(1)n^{-1/2} \| -\MI m_k + R_{m_k} \|_2 \\
& \leq | I_{\PhiK + \RF} |^{1/2} O_p(1) ( J_5 + O(L^{-\alpha_{\mathscr Z}})) \\
& = O_p( s_0^{ 1/2} K^{\alpha_{I_\Phi}/2})(J_5 + O(L^{-\alpha_{\mathscr Z}}))\\
& =O_p( s_0^{\alpha_{I_\Phi}/2 + 1/2} ) O_p( K^{\alpha_{\rho}} K^{\alpha_{\Phi}/2}s_0^{1/2} \log(L)^{1/2} + n^{1/2} L^{- \alpha_{\mathscr Z}}K^{\alpha_{\rho}} )\\
& =O_p( K^{\alpha_{\rho}} K^{\alpha_{\Phi}/2}s_0^{1}K^{\alpha_{I_\Phi}/2} \log(L)^{1/2} + n^{1/2} L^{- \alpha_{\mathscr Z}}K^{\alpha_{\rho}} ).\\
\end{align*}

\noindent \textbf{Statement 8.}  Proven analogously to Statement 7.

\

\noindent \textbf{Statement 9.} 
\begin{align*}
&\max_{k \leq K} \| \hat \beta_{W_k, I_{\PhiK + \RF} } \|_1 \\
& = \max_{k \leq K} \| ( \tilde Q' \tilde Q)^{-1} \tilde Q' W_k \|_1 \\
& \leq| |I_{\PhiK + \RF}|^{1/2} \max_{k \leq K} \| ( \tilde Q' \tilde Q)^{-1} \tilde Q' W_k \|_2 \\
& \leq  |I_{\PhiK + \RF}|^{1/2}\kappa_{\min}^{-1/2}(I_{\PhiK + \RF}|) \max_{k \leq K} \| n^{-1} Q'W_k \|_{\infty}\\
 &= O_p( s_0^{1/2} K^{ \alpha_{I_\Phi}/2} \cdot 1 \cdot n^{-1/2} \log(KL)^{1/2} ).
\end{align*}

\noindent \textbf{Statement 10.} \ \ Proven analogously to Statement 9.

\

\noindent \textbf{Statements 11-15.} \ \ Proven analogously to Statements 1,3,5,7,9.

\

\noindent \textbf{Statement 16.} \ \ 

$$ \n^{-1} \Big  \| \sum_{i=1}^n W_i ' R_{m,i} \Big \|_\F = n^{-1} \(\sum_{k} \| W' R_{m_k} \|_2^2 \)^{1/2} $$
$$\leq  n^{-1} \( \sum_{k} n^2 \zeta_0(K)^2 \| R_{m_k}^2\|_\infty  \)^{1/2}. $$
By the density assumption, $\| R_{m_k}\|_\infty \leq K^{\alpha_{\rho}}L^{-\alpha_{\mathscr Z}}$.  This then implies that $$ \n^{-1} \Big  \| \sum_{i=1}^n W_i ' R_{m,i} \Big \|_\F \leq K^{1/2}K^{-\alpha_{\rho}}.$$
\end{proof}

\begin{lemma}  

\

\begin{itemize}
\item[1.] $\Xi_1: =  n^{-1}\| W' \PI W \|_{\mathscr F} \leq  n^{-1/2} K J_9 J_1 $
\item[2.] $\Xi_2 : = n^{-1}\| m' \MI m\|_{\mathscr F} \leq K J_5^2 $
\item[3.] $\Xi_3:=n^{-1}\| m' \MI W \|_{\mathscr F} \leq J_{16} + n^{-1/2} K J_7J_1 $
\item[4.] $ \Xi_4:=n^{-1/2} \| m' \MI h_0(Z) \|_2 \leq n^{1/2}K^{1/2} J_5J_6 $
\item[5.] $ \Xi_5:= n^{-1/2} \| W' \MI h_0(Z) \|_2 \leq J_4 +  {K}^{1/2}J_{8} J_1$
\item[6.] $ \Xi_6: = n^{-1/2} \| W' \PI \mathscr E  \|_2 \leq K^{1/2}J_9J_2 $
\item[7.] $ \Xi_7:= n^{-1/2} \| m' \MI \mathscr E \|_2 \leq J_4 + K^{1/2}J_7J_2$
\item[8.] $ \Xi_8:=n^{-1/2} | m_a' \MI h_0(Z) | \leq n^{1/2}J_5J_{13} $
\item[9.] $ \Xi_9:= n^{-1/2} | W_a' \MI h_0(Z) | \leq  J_{12} +  J_{14} J_1$
\item[10.] $ \Xi_{10}: = n^{-1/2} | W_a' \PI \mathscr E  | \leq J_9J_{11} $
\item[11.] $ \Xi_{11}:= n^{-1/2} | m_a' \MI \mathscr E | \leq J_{12} + J_7J_{11}$.
\end{itemize}
\end{lemma}


\begin{proof} 

\

\

\noindent \textbf{Statement 1.}  \begin{align*}
\(n^{-1} \| W' \PI W \|_\F\)^2 & =\sum_{k,\bar k \leq K} (n^{-1} W_k' \PI W_{\bar k} )^2  = 
\\& = \sum_{k,\bar k  \leq K} (n^{-1} \hat \beta_{W_k , I_{\PhiK + RF}}'Q ' W_{\bar k})^2 \\ 
&\leq \sum_{k,\bar k \leq K} \| n^{-1/2} \hat \beta_{W_{ k}  , I_{\PhiK + RF}} \|_1 ^2 \|n^{-1/2}   Q'  W_{\bar k} \|_\infty ^2 \\
&=\(\sum_{k\leq K}   \|  n^{-1/2} \hat \beta_{W_k , I_{\PhiK + RF}} \|_1^2\)  \(\sum_{\bar k\leq K} \|n^{-1/2}  Q' W_{\bar k} \|_{\infty}^2\) \\
&\leq K \cdot n^{-1} J_9^2 \cdot K \cdot  J_1^2\\ \ \\
& \Rightarrow n^{-1}\| W' \PI W \|_\F \leq  n^{-1/2}KJ_1J_9.\\
\end{align*}

\

\noindent \textbf{Statement 2.}
\begin{align*}
\(n^{-1} \|  m' \MI m \|_\F\)^2 & = \sum_{k,\bar k \leq K} (n^{-1} m_k' \MI m_{\bar k} )^2   \leq  \sum_{k,\bar k \leq K} \|n^{-1/2} \MI m_k  \|_2^2 \|n^{-1/2} \MI m_{\bar k} \|_2^2 \\
&= \(\sum_{k\leq K} \| n^{-1/2} \MI m_k \|_2^2\)^2 \leq K^2 J_5^4 \\ \ \\
& \Rightarrow  n^{-1}\|m' \MI m \|_\F \leq KJ_5^2. 
\end{align*}

\noindent \textbf{Statement 3.}

\begin{align*}
n^{-1} \|m' \MI W\|_{\F} &=n^{-1} \| m' W/n - m' \PI W \|_\F \\
&= n^{-1} \| R_m'W + (Q\beta_{p^K,L,s_0})'W - m' \PI W\|_\F \\
&= n^{-1} \| R_m'W + (Q\beta_{p^K,L,s_0})'W - (Q\hat \beta_{m,I_{\PhiK + \RF} })'W \|_\F \\
&= n^{-1} \| R_m'W + (\beta_{p^K,L,s_0}-\hat \beta_{m,I_{\PhiK + \RF} })'Q'W \|_\F \\
&\leq n^{-1} \| R_m'W\|_\F + n^{-1}\| (\beta_{p^K,L,s_0}-\hat \beta_{m,I_{\PhiK + \RF} })'Q'W \|_\F. 
\end{align*}
Then the first term in the last line is bounded above as $n^{-1} \| R_m'W\|_\F = J_{16}$ while the second term has 
\begin{align*} &\(n^{-1} \| (\beta_{p^K,L,s_0}-\hat \beta_{m,I_{\PhiK + \RF} })'Q' W\|_{\F}\) ^2 \\
& =n^{-2}  \sum_{k,\bar k \leq K }( (\beta_{p_k,L,s_0}-\hat \beta_{m_k,I_{\PhiK + \RF} })'Q'W_{\bar k})^2 \\
&\leq  n^{-2}\sum_{k,\bar k \leq K } \|\beta_{p_k,L,s_0}-\hat \beta_{m_k,I_{\PhiK + \RF} } \|_1^2 \| Q'W_{\bar k} \|_{\infty}^2 \\
&=n^{-1}\(\sum_{k \leq K } \|\beta_{p_k,L,s_0}-\hat \beta_{m_k,I_{\PhiK + \RF} } \|_1^2 \) \(\sum_{\bar k \leq K } \| n^{-1/2} Q'W_{\bar k} \|_{\infty}^2 \) \\
&\leq n^{-1}K \cdot J_7^2 \cdot K \cdot J_1^2. 
\end{align*}
Therefore, $n^{-1} \|m' \MI W\|_{\F} \leq J_{16} + n^{-1/2}KJ_7J_1.$ 

\

\noindent \textbf{Statement 4.}
 \begin{align*}n^{-1/2} \|m' \MI h_0(Z) \|_2  & \leq n^{1/2}    \| n^{-1/2} \MI h_0(Z) \|_2 K^{1/2}  \max_{k \leq K} n^{-1/2} \|m' \MI \|_2   \\ &\leq n^{1/2}K^{1/2} J_5J_6. \end{align*}

\noindent \textbf{Statement 5.}
\begin{align*}
n^{-1/2} \| W' \MI h_0(Z) \|_2 & = n^{-1/2} \| W'  h_0(Z) - W' \PI h_0(Z) \|_2 \\
& = n^{-1/2} \| W'  h_0(Z) - W ' Q \hat \beta_{h_0(Z),I_{\PhiK + \RF}}  \|_2 \\
& =    n^{-1/2} \| W' R_{h_0} + W'  Q \beta_{h_0,L,s_0} - W ' Q \hat \beta_{h_0(Z),I_{\PhiK + \RF}}  \|_2 \\
& = n^{-1/2}\( \| W' h_0(Z) \|_2 +  \| (\hat \beta_{h_0(Z),I_{\PhiK + \RF}}  - \beta)' Q'W\|_2 \) \\
& \leq  J_4 +  {K}^{1/2} \max_{k \leq K} \| \betahat_{h_0(Z), I_{\PhiK + \RF}} - \beta_{h_0,L,s_0} \|_1 \|n^{-1/2}Q' W_k\|_{\infty} \\ 
& \leq J_4 +  {K}^{1/2}J_{8} J_1. 
\end{align*}

\noindent \textbf{Statement 6.}  \begin{align*}
\(n^{-1/2} \| W' \PI W \|_2\)^2 & = n^{-1} \sum_{k  \leq K} ( W_k' \PI \EC )^2   
\\& = n^{-1} \sum_{k  \leq K} ( \hat \beta_{W_k , I_{\PhiK + RF}}'Q ' \EC)^2 \\
& \leq \sum_{k \leq K} \|  \hat \beta_{W_{ k}  , I_{\PhiK + RF}} \|_1 ^2 \|n^{-1/2}   Q' \EC  \|_\infty ^2 \\
&\leq K \cdot  J_9^2 \cdot  J_2^2\\
& \Rightarrow n^{-1/2}\| W' \PI \EC \|_\F \leq  K^{1/2}J_9J_2.
\end{align*}

\noindent \textbf{Statement 7.}
\begin{align*}
n^{-1/2} \|m' \MI \EC\|_{2} &=n^{-1/2} \| m' \EC /n - m' \PI \EC \|_\F \\
&= n^{-1/2} \| R_m' \EC + (Q\beta_{p^K,L,s_0})'W - m' \PI \EC \|_\F \\
&= n^{-1/2} \| R_m' \EC + (Q\beta_{p^K,L,s_0})'W - (Q\hat \beta_{m,I_{\PhiK + \RF} })'\EC \|_2 \\
&= n^{-1/2} \| R_m' \EC + (\beta_{p^K,L,s_0}-\hat \beta_{m,I_{\PhiK + \RF} })'Q'\EC \|_2 \\
&\leq n^{-1/2} \| R_m' \EC \|_2 + n^{-1/2}\| (\beta_{p^K,L,s_0}-\hat \beta_{m,I_{\PhiK + \RF} })'Q'\EC \|_2. 
\end{align*}
Then the first term in the last line is bounded above as $n^{-1/2} \| R_m'\EC \|_2 = J_4$.  Turning to the second term, 
\begin{align*} & \(n^{-1/2} \| (\beta_{p^K,L,s_0}-\hat \beta_{m,I_{\PhiK + \RF} })'Q' \EC\|_{2}\)^2 \\
& =n^{-1}  \sum_{k\leq K }( (\beta_{p_k,L,s_0}-\hat \beta_{m_k,I_{\PhiK + \RF} })'Q'\EC)^2 \\
&\leq  \sum_{k \leq K } \|\beta_{p_k,L,s_0}-\hat \beta_{m_k,I_{\PhiK + \RF} } \|_1^2 \|n^{-1/2}  Q'\EC \|_{\infty}^2 \\
&\leq K \cdot J_7^2  \cdot J_2^2. 
\end{align*}
Therefore, $n^{-1/2} \|m' \MI \EC\|_{2} \leq J_4 + K^{1/2}J_7J_2$.

\


\noindent \textbf{Statements 8-11.} 

\

The argument is identical to the argument for Statements 4-7, adjusting appropriately for the fact that $m_a$ is $1$-dimensional rather than $K$-dimensional.
\end{proof}

The following corollaries follow directly from assumed rate conditions and the above bounds.  These are used in the proof of Theorems 1 and 2.
 
 \
 
\begin{corollary}

Under the assumptions of Theorem 1,
\begin{itemize}  
\item[1.] $\Xi_1 + \Xi_2 + \Xi_3 = O_p( n^{-1/2} \zeta_0(K)K^{1/2} )$
\item[2.] $n^{-1/2} (  \Xi_4 + \Xi_5 +  \Xi_6 + \Xi_7)  = O_p(n^{-1/2}K^{1/2} + K^{-\alpha_{g_0}}).$
\end{itemize}
\end{corollary}

\begin{corollary}

Under the assumptions of Theorem 2,
\begin{itemize}
\item[1.] $ n^{-1/2} \zeta_0(K)K^{1/2} (\Xi_4 + \Xi_5 + \Xi_6 + \Xi_7) = o_p(1).$
\item[2.] $\Xi_8 + \Xi_9 + \Xi_{10} + \Xi_{11} = o_p(1).$
\end{itemize}
\end{corollary}

 \begin{proof} Corollaries 1 and 2 follow from the rate conditions stated in Assumptions 2,9,11.  The also follow from the following more general rate conditions.
 
 {Sufficient rate conditions for Corollary 1:
\begin{itemize}
\item[1.] $s_0K^{\alpha_{I_\Phi}} = o(s_\kappa)$ 
\item[2.]$\log(KL) = o(\zeta_0(K)^{-1}n^{1/3})$
\item[3.]$ L^{-\alpha_{\mathscr Z}} n^{1/2} K^{-1/2} \zeta_0(K) = O(1)$ 
\item[4.]$ L^{-2\alpha_{\mathscr Z}} K^{2\alpha_{\rho}}( K^{1/2}n^{1/2} \zeta_0(K)^{-1} + n^{1/2} + K \log(L)^{1/2} \zeta_0(K)^{-2} )= O(1)$
\item[5.] $n^{-1/2}K^{1/2}s_0 \log(L) \zeta_0(K)^{-1} (K^{ 2 \alpha_{\rho} + \alpha_{\Phi}} + K^{\alpha_{\rho} + \alpha_{\Phi}/2 + \alpha_{I_\Phi}/2}) =O(1)$ 
\item[6.] $n^{-1/2} s_0^{1/2} \log(L) (K^{2 \alpha_{\rho} + \alpha_{\Phi} }s_0^{1/2} + K^{\alpha_{I_\Phi}/2} )= O(1)$.
\end{itemize}
Additional rate conditions sufficient (along with the above conditions) for Corollary 2:
\begin{itemize}  
\item[1.] $L^{-2\alpha_{\mathscr Z}}K^{2\alpha_{\rho}}(\zeta_0(K)K + \zeta_0(K)^4K^{1 - 2\alpha_{\rho}}+ K \log(L)+ n^{1/2} ) = o(1)$\item[2.] $n^{-1}s_0 \zeta_0(K)\log(L) (K^{1 + 2\alpha_{\rho} + \alpha_\Phi} + K^{1 + \alpha_\rho + \alpha_{\Phi}/2 + \alpha_{I_\Phi}/2})=o(1)$\item[3.] $n^{-1}s_0^2 \log(L)^2 (K^{4\alpha_{\rho} + 2\alpha_\Phi} + K^{2\alpha_\rho +  \alpha_{\Phi} + \alpha_{I_\Phi}})=o(1)$\item[4.] $s_0 K^{\alpha_{I_{\Phi}}} \(n^{-1/2} \zeta_0(K) K^{1/2} + K^{-\alpha_{g_0}}\) = o(1)$ 
\item[5.]  $ n^{2/(4+\delta)}\zeta_0(K)n^{-1/2}K^{1/2} =o(1).$  
\end{itemize}
}

 \end{proof}

 \
 
 \
 
 \
 
\subsection{Proof of Theorem 1}

\begin{lemma}

\

 \begin{itemize}
\item[1.]$\| \hat \Omega - \Omega \|_{\mathscr F} \leq O_p(\zeta_0(K)  K^{1/2} n^{-1/2}) + \Xi_1 + \Xi_2 + \Xi_3  = o_p(1)$   
\item[2.] $\|\hat \Omega^{-1} - \Omega^{-1}\|_{2 \rightarrow 2} = O_p(\zeta_0(K)  K^{1/2} n^{-1/2}) + O(\Xi_1 + \Xi_2 + \Xi_3).$
\end{itemize}
\end{lemma}

 \begin{proof} The argument in Theorem 1 of \cite{newey:series} 
 gives the bound $\| \bar \Omega - \Omega\|_{\mathscr F} = O_\Pr(\zeta_0(K)  K^{1/2} n^{-1/2})$.   
Next, using the decomposition, $P = m + W$,  write $ \hat \Omega = (m+W)' \MI (m+W)/n = W'W/n - W'(\text{Id}_n - \MI )W/n + m' \MI m/n + 2 m' \MI W/n$.  By triangle inquality, $\| \bar \Omega - \hat \Omega \|_{\mathscr F}  \leq \|W'  \PI W/n \|_{\mathscr F}  + \| m' \MI m/n \|_{\mathscr F}  +  \|  2m' \MI W/n \|_{\mathscr F}  = \Xi_1 + \Xi_2 + \Xi_3$.  Bounds for each of the three above terms are established above along with the assumed rate conditions give $\| \bar \Omega - \hat \Omega \|_{\mathscr F}  = o_p(1)$.  
The last statement holds by applying an expansion of the matrix inversion function around $\text{Id}_K$.
\begin{align*}
\hat \Omega^{-1}  &= (\text{Id}_K - (\text{Id}_K - \hat \Omega))^{-1} = \text{Id}_K+  (\text{Id}_K - \hat \Omega ) +  (\text{Id}_K - \hat \Omega )^2 + ...
\end{align*} 
The sum given above is with probability $\rightarrow 1$ absolutely convergent relative to the Frobenius norm $\F$.  In addition, by the bound $\|  \cdot  \|_{2 \rightarrow 2} \leq \|  \cdot  \|_{\F}$, we have  $
\| \hat \Omega^{-1} - \text{Id}_K \|_{2 \rightarrow 2}  \leq  \| \hat \Omega - \text{Id}_K \|_{\F}
 \leq \| \text{Id}_K - \hat \Omega \|_\F  + \| \text{Id}_K - \hat \Omega \|_\F^2 +...
=O_p(\zeta_0(K)  K^{1/2} n^{-1/2}) + O(\Xi_1 + \Xi_2 + \Xi_3)$.

 \end{proof}

\noindent Note that since $\Omega$ has minimal eigenvalues bounded from below by assumption, it follows that $\hat \Omega$ and $\bar \Omega$ are invertible with probability approaching 1. The reference \cite{newey:series} works on the event $1_n:=\{\lambda_{\min} (\hat \Omega) > 1/2 \}$ and later uses the fact that this event has probability $\rightarrow 1$.  This fact is used several times, however its use is only implicitly in reference to arguments in \cite{newey:series}.

\begin{lemma}
$ \| \hat \Omega^{-1} n^{-1} P' \MI \mathscr E \|_2 = O_p(n^{-1/2}K^{1/2})$.\end{lemma}

\begin{proof} 
\begin{align*} \| \hat \Omega^{-1} n^{-1} P' \MI \mathscr E \|_2 & \leq \| \hat \Omega^{-1} \|_{2 \rightarrow 2} n^{-1} \|P' \MI \mathscr E \|_2 \\ & \leq  \| \hat \Omega^{-1} \|_{2 \rightarrow 2} (n^{-1} \|W' \EC\|_2  + n^{-1/2} \Xi_6 + n^{-1/2}\Xi_7 )\end{align*}
$\|W'\EC\|_2 = O_p(n^{-1/2}K^{1/2} = O_p(n^{-1/2}) $ by arguments in \cite{newey:series}.  Bounds for $n^{-1/2} \Xi_6 + n^{-1/2}\Xi_7$ follows from the previous Lemmas and from the assumed rate conditions.
\end{proof}

\begin{lemma} $ \| \hat \Omega^{-1} P' \MI (g_0(X) - P \beta_{g_0,K})/n \|_{2} = O_p(K^{-\alpha_{g_0}})$.
\end{lemma}
\begin{proof}

\begin{align*}
 \| \hat \Omega^{-1} P' \MI (g_0(X) - P \beta_{g_0,K})/n \|_2 &= [(g_0(X)-P\beta)'\MI P \hat \Omega^{-1} P' \MI (g_0(X)-P\beta)/n ]^{1/2}\\ 
& = O_p(1) [(g_0(X)-P\beta_{g_0,K})'(g_0(Z)-P\beta_{g_0,K})/n]^{1/2}  \\
& = O_p(K^{-\alpha_{g_0}}) 
\end{align*}
by assumption on $(g_0(X)-P\beta_{g_0,K})$ and idempotency of $\MI P \hat \Omega^{-1} P' \MI = \MI P (P' \MI \MI P)^{-1} \MI $.
\end{proof}

\begin{lemma}
$ \|\hat \Omega^{-1} P' \mathscr \MI h_0(Z)/n \|_2 = o_p(n^{-1/2})$.
\end{lemma}
\begin{proof} $\hat \Omega$ has eigenvalues bounded below and above with probability approaching 1.  Then, 
\begin{align*} 
\| \hat \Omega^{-1} P' \MI h_0(Z) /n\|_2 & \leq O_p(1) \| P' \MI h_0(Z) /n\|_2 \\
 & = O_p(1) \| (m+W)' \MI h_0(Z) /n\|_2 \\
& = n^{-1/2} O_p(1) n^{-1/2}  \| (m+W)' \MI h_0(Z) /n\|_2 \\
& \leq n^{-1/2} O_p(1) (n^{-1/2}  \| m' \MI h_0(Z) \|_2  + n^{-1/2}\| W' \MI h_0(Z) \|_2) \\
& = n^{-1/2} O_p(1) ( \Xi_4 + \Xi_5)\\
& = n^{-1/2} O_p(1)o_p(1).
\end{align*}
\end{proof}

\begin{lemma} $\| \hat \beta_g-\beta_{g_0,K} \|_2= O_p(n^{-1/2} K^{1/2}+ K^{-\alpha_{g_0}}).$ 
\end{lemma}  
\begin{proof}Note that $ ([\hat \beta_{y,(\tilde p, \tilde q)}]_g - \beta_{g_0,K} ) = n^{-1}  \hat \Omega^{-1} P' \MI \mathscr E  + n^{-1}\hat \Omega^{-1} P' M_{\hat I} ( g_0(X) - P \beta_{g_0,K}) +  n^{-1}\hat \Omega^{-1} P' \MI h_0(Z) $.  Triangle inequality in conjuction with the bounds described in the previous three lemmas give the result.  
\end{proof}

The final statement of Theorem 1 follows from the bound on $\| \hat \beta_g-\beta_{g_0,K} \|_2$ using the arguments in \cite{newey:series}. \QED

\subsection{Proof of Theorem 2}
Recall that $F = V^{-1/2}$.  Let $\bar g = p^K(x)'\beta_{g_0,K}$ and decompose the quantity $n^{1/2} F [a(\hat g) - a( g_0)]$ by 
\begin{align*}
n^{1/2}  F [a(\hat g) - a( g_0)] &= n^{1/2}  F [  a(\hat g) - a( g_0) + D(\hat g) - D( g_0) \\
& + D(\bar g) - D(\hat g) \\
&+ D(g_0) - D(\bar g)].
\end{align*}

\begin{lemma} $ n^{1/2} F [D(\bar g) - D(g_0)]  =O( n^{1/2} K^{-\alpha_{g_0}}).$
\end{lemma} \begin{proof} This follows from arguments given in the proof of Theorem 2 in \cite{newey:series}.  Note that the statement does not contain any reference to random quantities. \end{proof}

\begin{lemma} $|n^{1/2} F [a(\hat g ) - a(g) - D(\hat g) + D(g) | = o_p(1).$
\end{lemma}

\begin{proof} Bounds on $| \hat g - g |_d $ given by Theorem 1 imply that $ |n^{1/2} F [a(\hat g ) - a(g_0) - D(\hat g) + D(g_0) |  \leq C n^{1/2} | \hat g - g_0 |_d ^2 
= O_p(n^{1/2} (n^{-1/2}\zeta_d(K)K^{1/2} + K^{-\alpha_{g_0}} )^2) = o_p(1).$  This is again identical to the reasoning given in Theorem 2 in \cite{newey:series}, since that references uses only a bound on $| \hat g - g |_d$ to prove the analogous result.
\end{proof}

The last step is to show that $n^{1/2} F [D(\hat g) - D(\bar g)] \rightarrow_d \N(0,1)$.

\begin{lemma} $n^{1/2} F [D(\hat g) - D(\bar g)] \rightarrow_d \N(0,1).$
\end{lemma} 

\begin{proof}
Note that $D(\hat g)$ can be expanded 
\begin{align*}
 D(\hat g) &= D( p^K(x) ' [\hat \beta_{y,(\tilde p, \tilde q)}]_g)  =  D( p^K(x) ' \hat \Omega^{-1} n^{-1}  P' \MI Y) \\
&=   D( p^K(x) '  \hat \Omega^{-1} n^{-1} P' \MI (g_0(X) + h_0(Z) + \mathscr E)) \\
& = D( p^K(x) )' \hat \Omega^{-1} n^{-1} (g_0(X) + h_0(Z) + \mathscr E) \\
 &=  A'  \hat \Omega^{-1} n^{-1} P' \MI (g_0(X) + h_0(Z) + \mathscr E) \\
 & =  A' \hat \Omega^{-1} n^{-1} P' \MI g_0(X) +  A' \hat \Omega^{-1} n^{-1} P' \MI h_0(Z) + A' \hat \Omega^{-1} n^{-1} P' \MI \mathscr E.
\end{align*}
In addition, $D(\bar g) = D( p^K(x)' \beta_{g_0,K}) = A' \beta_{g_0,K}$ gives 
\begin{align*}
n^{1/2} F [D(\hat g) - D(\bar g)] & = n^{1/2} F A' [ \hat \Omega^{-1}n^{-1}  P' \MI g_0(X) -  \beta_{g_0,K} ]\\ 
& + n^{1/2} FA'[ \hat \Omega^{-1} n^{-1} P' \MI (h_0(Z)+ \mathscr E )].
\end{align*}
The above equation gives a decomposition of the right hand side into two terms, which are next bounded separately.  Before proceeding, note that the following bounds $\| FA \|_2=O(1),$ $ \|FA' \hat \Omega^{-1}\|_2 =O_p(1)$, $\|FA' \hat \Omega^{-1/2}\|_2 =O_p(1)$, $ \|FA' \Omega^{-1}\|_2 =O_p(1)$, $\|FA'\Omega^{-1/2}\|_2 =O(1)$ all hold by arguments in \cite{newey:series}.  Consider the first term.
\begin{align*}
& | n^{1/2} FA'  [ n^{-1} \hat \Omega^{-1} P' \MI g_0(X) -  \beta_{g_0,K} ]  |\\
& =  | \sqrt n FA' [ (P' \MI P/n)^{-1} P' \MI (G - P  \beta)/n] |\\
& \leq  \|FA '  \hat \Omega^{-1} P' \MI / \sqrt n \|_2 \|g_0(X) - P  \beta_{g_0,K} \|_2 \\
& \leq  \|FA' \hat \Omega^{-1} P'\MI /\sqrt{n} \|_2 \sqrt n \max_{i \leq n} |g(x_i) - \bar g(x_i)| \\
& =  \|FA' \hat \Omega^{-1/2} \|_2  \sqrt{n}\max_{i \leq n} |g(x_i) - \bar g(x_i)| \\
& \leq \|FA' \hat \Omega^{-1/2}   \|_2 \sqrt n |g - \bar g |_0 \\ 
&= O_p(1) O_p(\sqrt n K ^ {-\alpha}) \\& = o_p(1). 
\end{align*}

Next, consider $n^{1/2}  FA' \hat \Omega^{-1} n^{-1}  P' \MI (h_0(Z)+\EC) $.  
To handle this term, first bound
\begin{align*}
& |n^{-1/2} FA' (\hat \Omega^{-1} - \Omega^{-1} ) P' \MI (h_0(Z) + \EC) | \\
&\leq n^{-1/2}   \| FA' (\hat \Omega^{-1} - \Omega^{-1} ) \|_2 \| P' \MI (h_0(Z) + \EC) \|_2 \\
&=  \| FA' (\hat \Omega^{-1} - \Omega^{-1} ) \|_2 (n^{-1/2} \| P' \MI (h_0(Z) + \EC) \|_2) \\
&=  \| FA' (\hat \Omega^{-1} - \Omega^{-1} ) \|_2 (n^{-1/2} \| (m + W)' \MI (h_0(Z) + \EC) \|_2) \\
&\leq   \| FA' (\hat \Omega^{-1} - \Omega^{-1} ) \|_2 ( \Xi_4 + \Xi_5 + \Xi_6 + \Xi_7) \\
& \leq \| \hat \Omega^{-1} - \Omega^{-1}  \|_{2 \rightarrow 2} \| FA' \|_2  ( \Xi_4 + \Xi_5 + \Xi_6 + \Xi_7)\\
& = o_p(1).
\end{align*}
Next consider the last remaining term for which a central limit result will be shown.  
\begin{align*}
&\sqrt n F A' \Omega ^{-1}P'\MI (h_0(Z) + \mathscr E)/n \\
&= \sqrt n F A' \Omega^{-1} (W+m)'\MI(h_0(Z) + \EC)/n  \\
&= \sqrt n (F A' \Omega^{-1}W + m_a)'\MI(h_0(Z) + \EC)/n  \\
&= \sqrt n F A' \Omega^{-1}W\MI \EC + \sqrt{n} m_a'\MI(h_0(Z) + \EC)/n + \sqrt{n} W_a'\MI h_0(Z)/n\\
&=  \sqrt n F A' \Omega^{-1}W \EC - \sqrt n F A' \Omega^{-1}W \PI \EC + \sqrt{n} m_a'\MI(h_0(Z) + \EC)/n + \sqrt{n} W_a'\MI h_0(Z)/n\\
&= \sqrt n FA'\Omega^{-1}W'\EC/n + o_p(1). \\
\end{align*}

Note that the last $o_p(1)$ bound in the equation array above holds by the fact that $|\sqrt n F A' \Omega^{-1}W \PI \EC + \sqrt{n} m_a'\MI(h_0(Z) + \EC)/n + \sqrt{n} W_a'\MI h_0(Z)/n| \leq \Xi_8 + \Xi_9 + \Xi_{10} + \Xi_{11}$.   The term $\sqrt n FA'\Omega^{-1}W'\EC/n$ satisfies the  conditions Lindbergh-Feller Central Limit Theorem, by arguments given in \cite{newey:series}.  
\end{proof}
The previous three lemmas prove that  $n^{1/2} F [a(\hat g) - a( g_0)] \rightarrow \N(0,1)$.

\

The next set of arguments bound $\hat V - V$.  For $\nu$ as in the statement of Assumption 14,   Define the event $ \mathscr A_g = \{|\hat g - g_0 |_d < \nu/2 \}$.  
 Define $\hat u = 1_{\mathscr A_g} \hat \Omega^{-1} \hat A F$ and $u = 1_{\mathscr A_g}\Omega^{-1} AF$.   In addition, define $\bar \Sigma = \sum_i W_i W_i' \varepsilon_i^2 /n$, an infeasible sample analogue of $\Sigma$.

\begin{lemma}
\
\begin{itemize}
\item[1.] $\| \hat A - A\|_2 = o_p(1)$
\item[2.] $\| \hat u - u \|_2 = o_p(1)$
\item[3.] $\| \bar  \Sigma - \Sigma\|_{\mathscr F} =o_p(1)$
\item[4.] $|\hat u \bar \Sigma \hat u - \hat u' \Sigma \hat u| =o_p(1)$.
\end{itemize}
\end{lemma}

\begin{proof}  

\noindent \textbf{Statement 1.  } In the case that $a(g)$ is linear in $g$, then $a(p' \beta) = A' \beta \implies \hat A = A$.  Therefore, consider the case that $a(g)$ is not linear in $g$. 
Using arguments identical to those in \cite{newey:series}, $1_{\mathscr A_g} =1 $ with probability $\rightarrow 1$, and $$1_{\mathscr A_g} \|\hat A - A\|_2 \leq C \cdot \zeta_d(K) |\hat g - g|_d.$$

\

\noindent \textbf{Statement 2.}  This follows from arguments in \cite{newey:series}.
  
  \
  
\noindent \textbf{Statement 3.}  This follows from arguments in \cite{newey:series}.

\

\noindent \textbf{Statement 4.}  An immediate implication of Statement 3 is that $ 1_{\mathscr A_g} |\hat u '\bar \Sigma \hat u - \hat u' \Sigma \hat u| = |\hat u ' (\bar \Sigma - \Sigma)\hat u| \leq \| \hat u \|_2^2 \|\bar \Sigma - \Sigma\|_{2\rightarrow 2}^2 = O_p(1) o_p(1).$

\end{proof}

\begin{lemma}
$\max_{i \leq n} |h_0(z_i) - \hat h(z_i)|  = o_p(1)$.
\end{lemma}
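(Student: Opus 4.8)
The plan is to express $\hat h$ through the post-selection coefficient vector, reduce the claim to an $\ell_1$ bound on that vector's estimation error, and then pay the price of converting a mean-square bound into a uniform-in-$i$ one.

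Write $\hat h(z_i) = \tilde q(z_i)'\hat\eta$, where $\hat\eta$ is the $\tilde Q$-block of $[\hat\beta'\ \hat\eta']' = ([P\ \tilde Q]'[P\ \tilde Q])^{-1}[P\ \tilde Q]'Y$, and let $\tilde\eta\in\RR^L$ denote $\hat\eta$ padded with zeros outside $\hat I$, so that $\tilde q(z_i)'\hat\eta = q(z_i)'\tilde\eta$. Splitting against the $s$-sparse approximant $\eta$ of Assumption~4(i),
$$h(z_i) - \hat h(z_i) = \big(h(z_i) - q(z_i)'\eta\big) + q(z_i)'(\eta - \tilde\eta),$$
where the first summand is $o_P(1)$ uniformly in $i$ by the second part of Assumption~4(i). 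For the second I would use $|q(z_i)'(\eta-\tilde\eta)| \leq (\max_{i,j}|q_j(z_i)|)\,\|\eta-\tilde\eta\|_1$ together with $\|\eta-\tilde\eta\|_1 \leq \sqrt{s+|\hat I|}\,\|\eta-\tilde\eta\|_2$ and $|\hat I| = O_P(sK)$ (Lemma~2 for $\hat I^{FS}$, the standard single-Lasso sparsity bound for $\hat I^{RF}$), so everything reduces to an $\ell_2$ bound on $\eta-\tilde\eta$.

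To bound $\|\eta-\tilde\eta\|_2$, note that by the Frisch--Waugh argument $\hat\eta = b(Y-P\hat\beta;\hat I)$; writing $Y-P\hat\beta = H + (G-P\beta) + \epsilon - P(\hat\beta-\beta)$ and applying $b(\cdot;\hat I)$ term by term (then embedding),
$$\|\eta-\tilde\eta\|_2 \leq \|\eta - b(H;\hat I)\|_2 + \|b(G-P\beta;\hat I)\|_2 + \|b(\epsilon;\hat I)\|_2 + \|b(P(\hat\beta-\beta);\hat I)\|_2.$$
The first term is controlled by Lemma~3(iv) (through Lemma~3(ii)). For the remaining three I would use $\|b(u;\hat I)\|_2 \leq O_P(1)\,\|\PI u\|_2/\sqrt n$, the $O_P(1)$ coming from the sparse eigenvalue lower bound of Assumption~5 on the $|\hat I| = O_P(sK)$ selected columns, together with: $\|G-P\beta\|_2/\sqrt n = O(K^{-\alpha})$ by Assumption~2; $\|\PI\epsilon\|_2/\sqrt n = O_P(\sqrt{K\phi/n})$, obtained from $\|\PI\epsilon\|_2^2/n = O_P(|\hat I|\,\|Q'\epsilon/\sqrt n\|_\infty^2/n)$ and Lemma~3(v); and $\|P(\hat\beta-\beta)\|_2/\sqrt n = O_P(\sqrt{K/n}+K^{-\alpha})$ from Proposition~1. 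Collecting, $\max_i|q(z_i)'(\eta-\tilde\eta)| = O_P\!\big((\max_{i,j}|q_j(z_i)|)\,\sqrt{sK}\,(\sqrt{K\phi/n}+K^{-\alpha})\big)$, and the rate condition in Assumption~9(iv) --- which multiplies $\max_{i,j}|q_j(z_i)|$ against precisely these estimation-error rates --- forces the right-hand side to be $o_P(1)$, which finishes the proof.

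The step I expect to be the main obstacle is the passage from the mean-square control that Lemma~3 and Proposition~1 supply for residuals of projections onto the selected dictionary to the uniform-in-$i$ control needed here. Crossing that gap costs a factor $\max_{i,j}|q_j(z_i)|$ and, because $\tilde q$ is a union of $K$ first-stage Lasso supports and so carries $O_P(sK)$ rather than $O_P(s)$ terms, a further $\sqrt{|\hat I|} = O_P(\sqrt{sK})$; checking that these multiplications remain harmless under Assumption~9(iv), while also tracking that the same operator $b(\cdot;\hat I)$ acts simultaneously on the approximation error, the noise, and the first-stage estimation error of $\hat\beta$, is where the care is needed.
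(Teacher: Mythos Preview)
Your proposal is correct and follows essentially the same route as the paper's own proof: split via the sparse approximant $\eta$, pass from $\ell_2$ to uniform control at the cost of $\sqrt{|\hat I|+s}\,\max_{i,j}|q_j(z_i)|$, decompose $\hat\eta-\eta$ through $Y-P\hat\beta=H+(G-P\beta)+\epsilon-P(\hat\beta-\beta)$, bound the four pieces with Lemma~3(iv), Assumption~2, Lemma~3(v) and Proposition~1, and close with the rate condition in Assumption~9(iv). The only cosmetic difference is in handling $\|b(P(\hat\beta-\beta);\hat I)\|$, where the paper uses an $\ell_\infty$-type bound (picking up an explicit $\zeta_0(K)$ factor) while you route through $\|\PI u\|/\sqrt n$; both feed into the same rate condition.
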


\begin{proof}
First note that 
  $$\max_i |h_0(z_i) - \hat h(z_i)| \leq \max_i |h_0(z_i) - q^L(z_i)' \beta_{h_0,L,s_0}| $$ $$ \hspace{5cm}+ \max_i |q^L(z_i)'[\hat \beta_{y,(\tilde p, \tilde q)}]_h - q^L(z_i)' \beta_{h_0,L,s_0}|.$$  The first term has the bound $ \max_i |h_0(z_i) - q(x_i)' \eta|  = O_p(L^{-\alpha_{\mathscr Z}} )$ by assumption. Next,   
  \begin{align*}
\max_i |q^L(z_i)'[\hat \beta_{y,(\tilde p, \tilde q)}]_h - q^L(z_i)' \beta_{h_0,L,s_0}| &= \max_i |q^L(z_i)'([\hat \beta_{y,(\tilde p, \tilde q)}]_h - \beta_{h_0,L,s_0})| \\
& \leq \max_i \| q^L(z_i)\|_{\infty} \| [\hat \beta_{y,(\tilde p, \tilde q)}]_h - \beta_{h_0,L,s_0}\|_1 \\
\end{align*}
Then,
\begin{align*}
\|[\hat \beta_{y,(\tilde p, \tilde q)}]_h - \beta_{h_0,L,s_0} \|_1 &= \| \hat \beta_{y - \hat g, I_{\PhiK + \RF}} -\beta_{h_0,L,s_0}  \|_1\\
& = \|  \hat \beta_{g_0, I_{\PhiK + \RF}} + \hat \beta_{h_0, I_{\PhiK + \RF}}  + \hat \beta_{\varepsilon, I_{\PhiK + \RF}}   - \hat \beta_{ \hat g, I_{\PhiK + \RF}}  -\beta_{h_0,L,s_0} \|_1. \\
&\leq \| \hat \beta_{h_0, I_{\PhiK + \RF}} - \beta_{h_0,L,s_0}\|_1 + \|  \hat \beta_{\varepsilon, I_{\PhiK + \RF}} \|_1 + \| \hat \beta_{g_0 - \hat g, I_{\PhiK + \RF}} \|_1\\
& = J_8 + J_{10} +\| \hat \beta_{g_0 - \hat g, I_{\PhiK + \RF}} \|_1.
\end{align*}

Next, \begin{align*}
& \| \hat \beta_{g_0 - \hat g, I_{\PhiK + \RF}} \|_1  \\& \leq | I_{\PhiK + \RF}|^{1/2}  \| \hat \beta_{g_0 - \hat g, I_{\PhiK+ \RF}}\|_2 \\
& = |I_{\PhiK + \RF}|^{1/2} \|(Q_{I_{\PhiK + \RF}}'Q_{I_{\PhiK + \RF}}/n)^{-1} Q_{I_{\PhiK + \RF}}'  (g_0(X) - \hat g(X))/n\|_2 \\
& \leq |I_{\PhiK + \RF}|^{1/2}  \kappa_{\min}(| I_{\PhiK + \RF} |)^{-1} \| Q_{I_{\PhiK + \RF}}'  (g_0(X) - \hat g(X)) /n\|_2\\
& \leq |I_{\PhiK + \RF}|^{1/2}  \kappa_{\min}(| I_{\PhiK + \RF} |)^{-1} |I_{\PhiK + \RF}|^{1/2}  \| Q_{I_{\PhiK + \RF}}'  (g_0(X) - \hat g(X))/n \|_\infty\\
& \leq |I_{\PhiK + \RF}|  \kappa_{\min}(| I_{\PhiK + \RF} |)^{-1} \| Q_{I_{\PhiK + \RF}}'  (g_0(X) - \hat g(X)) /n\|_\infty\\
& =  |I_{\PhiK + \RF}|  \kappa_{\min}(| I_{\PhiK + \RF} |)^{-1}   \(\max_{j}  n^{-1} \sum_{i=1}^n| q_{jL}(z_i) | \) \|  g_0(X) - \hat g(X)  \|_\infty \\
&= O_p(s_0^{1}K^{\alpha_{I_\Phi}}) O_p(1) o_p(n^{-1/2} \zeta_0(K) K^{1/2} + K^{-\alpha_{g_0}}).
\end{align*}
 
 Putting these together, it follows from the assumed rate conditions that $$\max_{i} | h_0(z_i) - \hat h(z_i)| = o_p(1).$$

\end{proof}

Next, let $\Delta_{g_0i} = g_0(x_i) - \hat g(x_i)$ and $\Delta_{h_0i} = h_0(z_i) - \hat h(z_i)$.  Then above lemma states $\max_{i \leq n} \Delta_{h_0i} = o_p(1)$.  In addition $\max_{i \leq n} |\Delta_{g_0i}| \leq |\hat g - g|_0 = o_p(1) $.  
Let $\omega_i^2 =  u' W_i W_i'  u$ and $\hat \omega_i^2 = \hat u' W_i W_i' \hat u$.  

\begin{lemma} $|F \hat V F - \hat u '\bar \Sigma \hat u| =o_p(1)$.
\end{lemma}

\begin{proof}
\begin{align*}
1_{\mathscr A_g} |F \hat V F - \hat u' \bar \Sigma \hat u| &=| \hat u' ( \hat \Sigma - \bar \Sigma) \hat u | = \left | \sum_{i=1}^n \hat u' \hat W_i \hat W_i' \hat \varepsilon_i^2  \hat u/n -  \sum_{i=1}^n  \hat u' W_i W_i' \varepsilon_i^2  \hat u/n \right |  \\ 
& \leq \left | \sum_{i=1}^n  \omega_i^2 (\hat \varepsilon_i^2 - \varepsilon_i^2)  /n \right |+
  \left | \sum_{i=1}^n (\hat \omega_i^2- \omega_i^2) \hat \varepsilon_i^2 /n  \right |.
\end{align*}
Both terms on the right hand side will be bounded.  Consider the first term.  Expanding $ (\hat \varepsilon_i^2 - \varepsilon_i^2) $ gives
$$ \left | \sum_{i=1}^n  \omega_i^2 (\hat \varepsilon_i^2 - \varepsilon_i^2)  /n \right | \leq  \left | \sum_{i=1}^n \omega_i^2 \Delta_{1i}^2/n \right | + \left | \sum_{i=1}^n \omega_i^2  \Delta_{2i}^2/n\right | + \left | \sum_{i=1}^n\omega_i^2\Delta_{1i}\Delta{2i}/n \right | $$ $$
+ 2 \left | \sum_{i=1}^n \omega_i^2  \Delta_{1i}\varepsilon_i/n \right | + 2 \left | \sum_{i=1}^n \omega_i^2 \Delta_{2i}\varepsilon_i/n \right |.$$
Note that $\sum_{i=1}^n \omega_i^2/n, \sum_{i=1}^n \omega_i^2|\varepsilon_i| = O_p(1)$ by arguments is in \cite{newey:series}.  The five terms above are then bounded in order of their appearence by
\begin{align*}
&\sum_{i=1}^n \omega_i^2  \Delta_{1i}^2/n \leq \max_{i\leq n} |\Delta_{1i}| \sum_{i=1}^n \omega_i^2/n = o_p(1)O_p(1)  \\
& \sum_{i=1}^n \omega_i^2  \Delta_{2i}^2/n \leq \max_{i \leq n}  |\Delta_{2i} | \sum_{i=1}^n \omega_i^2 |\varepsilon_i|/n = o_p(1) O_p(1) \\
&\sum_{i=1}^n \omega_i^2  \Delta_{1i}\Delta_{2i}/n \leq \max_{i\leq n} |\Delta_{1i}|\max_{i\leq n}|\Delta_{2i}| \sum_{i=1}^n \omega_i^2  /n = o_p(1)O_p(1)  \\
&\sum_{i=1}^n \omega_i^2  \Delta_{1i}\varepsilon_i/n \leq \max_{i\leq n} |\Delta_{1i}| \sum_{i=1}^n \omega_i^2 |\varepsilon_i| /n = o_p(1)O_p(1)  \\
&\sum_{i=1}^n \omega_i^2  \Delta_{2i}\varepsilon_i/n \leq \max_{i\leq n} |\Delta_{2i}| \sum_{i=1}^n \omega_i^2 |\varepsilon_i| /n = o_p(1)O_p(1).  \\
\end{align*}

The second term is bounded by
\begin{align*}
&\left | \sum_{i=1}^n \hat u '(\hat W_i \hat W_i' - W_i W_i') \hat \varepsilon_i^2 \hat u/n \right | \leq \max_{i \leq n} |\varepsilon_i^2 | \left | \sum_{i=1}^n \hat u'(\hat W_i \hat W_i' - W_i W_i') \hat u/n \right | \\
&\leq \max_{i \leq n} |\hat \varepsilon_i^2 | \| \hat u \|^2_2 \| \sum_{i=1}^n (\hat W_i \hat W_i' - W_i W_i') /n \|_{2 \rightarrow 2}   = \max_{i \leq n} |\hat \varepsilon_i^2 | \| \hat u \|^2_2 \| \|\hat \Omega - \bar \Omega\|_{2 \rightarrow 2}  \\
& \leq \left ( \max_{i \leq n} | \varepsilon_i^2 | + \max_{i \leq n} |\hat \varepsilon_i^2 - \varepsilon_i^2| \right ) \| \hat u \|_2^2 \| \|\hat \Omega - \bar \Omega\|_{2 \rightarrow 2} \\
&= \( O_p(n^{2/\delta}) + o_p(1) \) O_p(1)( O_p( \zeta_0(K) n^{-1/2}K^{1/2} ) + \Xi_{1} + \Xi_2 + \Xi_3) \\
&= o_p(1).
\end{align*}
where the last bounds come from the rate condition in Assumption 9 and $\max_{i \leq n} |\hat \varepsilon_i^2 - \varepsilon_i^2| = o_p(1)$ by $\max_{i \leq n} |\Delta_{1i}| + |\Delta_{2i}| =o_p(1)$.
\end{proof}

These results give the conclusion that 
$$n^{1/2} \hat V^{-1/2} (\hat \theta - \theta) =  n^{1/2} (F \hat V F)^{-1/2} (\hat \theta - \theta) \convdist N(0,1).$$
Calculations which give the rates of convergence in each of the cases of Assumption 17 or of Assumption 18, as well as the proof of the second statement of Theorem 2, use the same arguments as in \cite{newey:series}.  This concludes the proof.
\QED

\end{document}